\def\Z{{\Bbb Z}}
\def\a{{\alpha}}
\newtheorem{theorem}{Theorem}[section]
\newtheorem{lemma}[theorem]{Lemma}
\theoremstyle{definition}
\newtheorem{example}[theorem]{Example}
\theoremstyle{remark}
\begin{document}
\title{Bordered surfaces in the 3-sphere with maximum symmetry}
\author{Chao Wang, Shicheng Wang, Yimu Zhang and Bruno Zimmermann}

\date{}
\maketitle

\begin{abstract}

We consider orientation-preserving actions of finite groups $G$
on pairs $(S^3, \Sigma)$, where $\Sigma$ denotes a compact connected surface
embedded in $S^3$. In a previous paper, we considered the case of closed, necessarily orientable surfaces, determined for each genus $g>1$
the maximum order of such a $G$ for all embeddings of a
surface of genus $g$, and classified the corresponding
embeddings.

\smallskip

In the present paper we obtain analogous results for the case of
bordered surfaces $\Sigma$ (i.e. with non-empty boundary, orientable or
not). Now the genus $g$ gets replaced by the algebraic genus $\alpha$ of
$\Sigma$ (the rank of its free fundamental group); for each $\alpha > 1$
we determine the maximum order $m_\alpha$ of an action of $G$,
classify the topological types of the corresponding surfaces (topological genus, number of boundary components, orientability) and their embeddings into
$S^3$. For example, the maximal possibility $12(\alpha - 1)$
is obtained for the finitely many values $\alpha = 2, 3, 4, 5, 9, 11,
25, 97, 121$ and $241$.

\end{abstract}

\tableofcontents

\section{Introduction}
We study smooth, faithful actions of finite groups $G$ on pairs
$(S^3, \Sigma)$ where $\Sigma$ denotes a compact, connected, bordered surface
with an embedding $e: \Sigma\to S^3$ (so $G$ is a finite group of diffeomorphisms of a pair $(S^3, \Sigma)$). We also say that such a $G$-action
on $\Sigma$ is \textit{extendable} (w.r.t. $e$).

Let $\Sigma_{g, b}$ denote the orientable compact surface of (topological) genus $g$ with $b$ boundary components, writing also $\Sigma_g$ instead of $\Sigma_{g,0}$; for $g>0$, let $\Sigma^-_{g, b}$ denote the non-orientable compact surface of genus $g$ with $b$ boundary components. $\Sigma^-_{g, b}$ is obtained from the connected sum of $g$ real projective planes by creating  $b$ boundary components (by deleting the interiors of $b$ disjoint embedded disks), and it is well-known that each compact surface is either $\Sigma_{g, b}$ or $\Sigma^-_{g, b}$.

For $b>0$, $\Sigma_{g, b}$ and $\Sigma^-_{g, b}$ are bordered surfaces, and we use $\alpha(\Sigma)$ to denote their algebraic genus equal to the rank of the free fundamental group $\pi_1(\Sigma)$; this is also the genus of a regular neighbourhood of $\Sigma$ in $S^3$ which is a 3-dimensional handlebody. We have $\alpha(\Sigma_{g, b})=2g-1+b$ and $\alpha(\Sigma^-_{g, b})=g-1+b$.
We will always assume that $\alpha>1$ in the present paper.

We will consider only orientation-preserving
finite group actions on $S^3$; then, referring to the recent
geometrization of finite group actions on $S^3$ (see \cite{BMP} for the case of non-free actions and \cite{Pe} for the general case), we can restrict to orthogonal actions of finite groups on $S^3$, i.e. to finite
subgroups $G$ of the orthogonal group $SO(4)$.

Let $m_\alpha$ denote the maximum order of such a group $G$ acting on a
pair $(S^3, \Sigma)$, for all embeddings of bordered surfaces $\Sigma$
of a fixed algebraic genus $\alpha$ into $S^3$. In the present paper we will determine
$m_\alpha$ and classify all surfaces $\Sigma$ which realize the
maximum order $m_\alpha$.

A similar question for the pair $(S^3, \Sigma_g)$, where $\Sigma_g$
is the closed orientable surface of genus $g$, was studied in
\cite{WWZZ}. The corresponding maximum order $OE_g$ of finite
groups acting on $(S^3, \Sigma_g)$ for all possible embeddings
$\Sigma_g\subset S^3$ was obtained in that paper.

Let $V_g$ denote the handlebody of genus $g$. Each bordered surface
$\Sigma\subset S^3$ of algebraic genus $
\alpha$ has a regular neighborhood which is homeomorphic to $V_\alpha$. We note that, similar as for handlebodies, the maximal possibilities for the orders of groups of homeomorphisms of compact bordered surfaces of algebraic genus $\a$ are $12(\a-1)$, $8(\a-1)$, $20(\a-1)/3$, $6(\a-1)$, ... (see section 3 of \cite{MZ}), and these are exactly the values occurring in the next theorem. A classification of all finite group actions on compact bordered surfaces up to algebraic genus $101$ is given in \cite{BCC}, and the lists in that paper may be compared with the list in the next theorem.
Concerning other papers considering symmetries of surfaces immersed in 3-space, see \cite{CC}, \cite{CH} and \cite{LT}.

Our main result is:
\newpage
\renewcommand\arraystretch{1.3}
\begin{theorem}\label{mg}
For each $\a>1$,  $m_\a$ and the surfaces realizing $m_\a$ are listed below.
\begin{center}
\begin{adjustwidth}{-3.2em}{0em}
\begin{tabular}{|c|c|c|}
\hline  $\a$ & $m_\a$ & $\Sigma$\\\hline\hline
$2$ & $12(\a-1)=12$ & $\Sigma_{0,3}$, $\Sigma_{1,1}$\\\hline
$3$ & $12(\a-1)=24$ & $\Sigma_{0,4}$, $\Sigma^-_{1,3}$\\\hline
$4$ & $12(\a-1)=36$ & $\Sigma_{1,3}$\\\hline
$5$ & $12(\a-1)=48$ & $\Sigma_{0,6}$, $\Sigma_{1,4}$\\\hline
$9$ & $12(\a-1)=96$ & $\Sigma_{2,6}$, $\Sigma_{3,4}$\\\hline
$11$ & $12(\a-1)=120$ & $\Sigma_{0,12}$, $\Sigma^-_{6,6}$\\\hline
$25$ & $12(\a-1)=288$ & $\Sigma_{7,12}$, $\Sigma_{10,6}$\\\hline
$97$ & $12(\a-1)=1152$ & $\Sigma_{37,24}$\\\hline
$121$ & $12(\a-1)=1440$ & $\Sigma_{43,36}$, $\Sigma_{55,12}$\\\hline
$241$ & $12(\a-1)=2880$ & $\Sigma_{73,96}$, $\Sigma_{97,48}$, $\Sigma^-_{206,36}$\\\hline
\hline
$7$ & $8(\a-1)=48$ & $\Sigma_{0,8}$, $\Sigma^-_{4,4}$\\\hline
$49$ & $8(\a-1)=384$ & $\Sigma_{17,16}$, $\Sigma_{21,8}$\\\hline\hline
$16$ & $\frac{20}{3}(\a-1)=100$ & $\Sigma_{6,5}$\\\hline
$19$ & $\frac{20}{3}(\a-1)=120$ & $\Sigma_{0,20}$, $\Sigma^-_{14,6}$\\\hline
$361$ & $\frac{20}{3}(\a-1)=2400$ & $\Sigma_{131,100}$, $\Sigma_{151,60}$, $\Sigma_{171,20}$\\\hline\hline
$21$ & $6(\a-1)=120$ & $\Sigma_{5,12}$\\\hline
$481$ & $6(\a-1)=2880$ & $\Sigma_{205,72}$, $\Sigma_{193,96}$\\\hline\hline
$41$ & $\frac{24}{5}(\a-1)=192$ & $\Sigma^-_{30,12}$\\\hline\hline
$1681$ & $\frac{30}{7}(\a-1)=7200$ & $\Sigma^-_{1562,120}$\\\hline\hline
$841$& $4(\sqrt{\a}+1)^2=3600$ & $\Sigma_{391,60}$, $\Sigma_{406,30}$\\\hline
$k^2$, $k \neq 3,5,7,11,19,41$ & $4(\sqrt{\a}+1)^2$ & $\Sigma_{\frac{k(k-1)}{2},k+1}$\\\hline\hline
$29$& $4(\a+1)=120$ & $\Sigma_{0,30}$, $\Sigma_{9,12}$, $\Sigma_{14,2}$\\\hline
 \text{the remaining numbers}& $4(\a+1)$ & $\Sigma_{0,\a+1}$, $\Sigma_{\frac{\a}{2},1}$ \text{($\a$ even)},  $\Sigma_{\frac{\a-1}{2},2}$ \text{($\a$ odd)},\\\hline
\end{tabular}
\end{adjustwidth}
\end{center}
\end{theorem}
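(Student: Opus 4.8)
The plan is to reduce the pair $(S^3,\Sigma)$ to an orthogonal action on a handlebody, bound the order abstractly, and then cut the abstract list down to those actions that actually extend to $S^3$. First I would replace $\Sigma$ by a $G$-invariant regular neighbourhood $V=N(\Sigma)$, which is a handlebody $V_\a$ carrying an orientation-preserving $G$-action, with $\Sigma$ embedded as a $G$-invariant spine surface satisfying $N(\Sigma)=V_\a$; conversely a bordered surface realizing $m_\a$ is recovered as such an invariant spine. Thus the problem factors into (i) bounding and classifying the finite groups $G\subset SO(4)$ that preserve a genus-$\a$ handlebody in $S^3$, and (ii) for each maximal $G$, listing the admissible spine surfaces and their topological types.

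For the abstract bound I would use the theory of finite group actions on handlebodies. The quotient $V_\a/G$ is a handlebody orbifold, and the induced action on $\partial V_\a$ (a closed genus-$\a$ surface) yields by Riemann--Hurwitz the relation $\a-1=|G|\cdot c$, where $c=-\tfrac12\chi^{\mathrm{orb}}(\partial V_\a/G)$ ranges over a discrete set of positive rationals whose smallest values $\tfrac1{12},\tfrac18,\tfrac3{20},\tfrac16,\dots$ reproduce exactly the handlebody orders $12(\a-1),8(\a-1),\tfrac{20}{3}(\a-1),6(\a-1),\dots$ of \cite{MZ}. In particular $|G|\le 12(\a-1)$ for all $\a>1$, and any order exceeding the generic value must come from this short list; the corresponding abstract surface actions are already tabulated in \cite{BCC} and serve as my candidate list.

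The decisive step is to decide which of these candidates extend to an orthogonal action on $(S^3,V_\a)$. Here I would invoke $SO(4)=(S^3\times S^3)/\{\pm 1\}$ and the classification of its finite subgroups via pairs of binary polyhedral groups, together with the fact that an invariant Heegaard surface descends to a Heegaard $2$-orbifold of the spherical orbifold $S^3/G$ splitting it into two handlebody orbifolds. Matching the building blocks of $G$ against the orbifold modulus $c$ forces number-theoretic constraints on $\a$: the top order $12(\a-1)$ survives only for the finitely many listed values $\a=2,3,4,5,9,11,25,97,121,241$, the orders $8(\a-1),\tfrac{20}{3}(\a-1),\dots$ survive for their own sparse sets, the perfect-square family $\a=k^2$ gives $4(\sqrt\a+1)^2$, and a product/dihedral-type construction realizes $4(\a+1)$ for every $\a$. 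Taking for each $\a$ the largest realizable order then yields the row division of the table.

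Finally, for each maximizing action I would enumerate the $G$-invariant spines $\Sigma$ with $N(\Sigma)=V_\a$ and read off the genus $g$, the number $b$ of boundary circles, and the orientability from a $G$-equivariant cell decomposition, checking $\a(\Sigma)=\a$ for consistency, and then exhibit an explicit orthogonal model for each surface to prove attainment and completeness. The hard part will be the extendability analysis of the third paragraph: one must simultaneously identify, for each order in the handlebody spectrum, the precise arithmetic of $\a$ for which a compatible finite subgroup of $SO(4)$ exists, and rule out every larger order for the remaining $\a$, so that the sharp generic bound $4(\a+1)$ is correctly isolated. A secondary difficulty is guaranteeing that the spine enumeration in the last step is exhaustive and non-redundant, so that no admissible topological type is missed or repeated.
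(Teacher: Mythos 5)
Your overall skeleton (pass to a $G$-invariant regular neighbourhood $V_\alpha=N(\Sigma)$, use the handlebody-orbifold spectrum $12(\alpha-1),8(\alpha-1),\frac{20}{3}(\alpha-1),\dots$ of \cite{MZ} as the candidate list, then decide extendability to $S^3$) matches the paper's starting point, but the decisive step of your plan is asserted rather than proved, and two of its structural assumptions are actually false. First, the paragraph in which you "match the building blocks of $G$ against the orbifold modulus $c$" and conclude that $12(\alpha-1)$ survives exactly for $\alpha=2,3,4,5,9,11,25,97,121,241$, etc., is the entire content of the theorem; no mechanism is given that could produce these arithmetic sets. The paper does not rederive this from the classification of finite subgroups of $SO(4)$: it imports Theorem \ref{classify} (from \cite{WWZZ}, built on Dunbar's tables \cite{Du1} of spherical $3$-orbifolds with underlying space $S^3$), which lists all allowable singular edges and dashed arcs, and then performs a finite check. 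Second, your reduction to "Heegaard $2$-orbifolds splitting $S^3/G$ into two handlebody orbifolds" is wrong: an invariant neighbourhood $\partial N(\Sigma)$ need not be unknotted, and knotted edges/arcs (the subscript $k$ in Theorem \ref{classify}) genuinely contribute to the answer — for instance two of the three surfaces realizing $m_{241}=2880$ come from the knotted edges $b_k,c_k$ of orbifold 38, and $\Sigma_{5,12}$ realizing $m_{21}=120$ comes from a knotted dashed arc. A framework that only sees unknotted splittings can neither certify the maxima nor produce the full list of realizing surfaces.

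There is also a genuine ordering error in your factorization into (i) maximal handlebody actions and (ii) spine enumeration "for each maximal $G$". The maximum order of an extendable action on $(S^3,V_\alpha)$ can strictly exceed $m_\alpha$, because the maximal handlebody action may admit no invariant bordered spine at all. Concretely, for $\alpha=6$ the edge $a$ of orbifold 24 in Table \ref{tab:nonfibre} gives an extendable action of order $60=12(6-1)$ on a genus-$6$ handlebody, yet $m_6=4(6+1)=28$; for $\alpha=17$, orbifold 29 gives an extendable order $192$, yet $m_{17}=72$. The obstruction is exactly the spine condition analyzed in Section 3: a type (a) handlebody orbifold carries a bordered-surface orbifold $X$ only if the middle singular edge has index $2$ and both vertex groups are dihedral (with analogous conditions in the dashed-arc case), and this fails for those orbifolds. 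So spine existence must be folded into the maximization itself, with the additional "dual edge" bookkeeping of Theorem \ref{position} (an edge that is not allowable may have an allowable dual edge, e.g. $a'$ in orbifold 26, and dual edges can give different bordered surfaces); as written, your procedure would output $m_6=60$ and $m_{17}=192$ and would never isolate the generic row $4(\alpha+1)$ correctly. Finally, the topological types $(g,b,$ orientability$)$ are not read off from an unspecified equivariant cell decomposition in the paper, but from the index of the boundary subgroup in $\pi_1$ of the quotient orbifold (Lemma \ref{boundNums}) and an orientability criterion via homomorphisms to $\Z_2$ (Lemmas \ref{oriented1}, \ref{oriented2}), carried out by hand for the two infinite families and by computer for the finitely many remaining orbifolds.
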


\bigskip\noindent\textbf{Acknowledgement}.
This work was supported by National Natural Science Foundation of China (Grant Nos.
11371034 and 11501239).

\section{The case of closed surfaces: 3-orbifolds and main results in \cite{WWZZ}}\label{orbifold}

For orbifold theory, see \cite{Th}, \cite{Du1} or \cite{BMP}. We
give a brief introduction here for later use.

All of the $n$-orbifolds that we consider have the form $M/H$. Here $M$ is an
orientable $n$-manifold and $H$ is a finite group acting faithfully
on $M$, preserving orientation. For each point $x\in M$, denote
its stable subgroup by $St(x)$, its image in $M/H$ by $x'$. If
$|St(x)|>1$, $x'$ is called a \textit{singular} point with \textit{index} $|St(x)|$,
otherwise it is called a \textit{regular} point. If we forget the singular
set we get the topological \textit{underlying space} $|M/H|$ of the orbifold.

We can also define covering spaces and the fundamental group of an
orbifold. There is an one-to-one correspondence between orbifold
covering spaces and conjugacy classes of subgroups of the
fundamental group, and regular covering spaces correspond to normal
subgroups. A Van-Kampen theorem is also valid, see \cite[Corollary 2.3]{BMP}. In
the following, automorphisms, covering spaces and fundamental groups
always refer to the orbifold setting.

We call $B^n/H$ (resp. $S^n/H,V_g/H)$ a \textit{discal (resp. spherical, handlebody)
orbifold}. Here $B^n$ (resp. $S^n$) denotes the $n$-dimensional ball (resp. sphere).
By classical results, $B^2/H$ is a disk, possibly with one singular
point; $B^3/H$ belongs to one of the five models in Figure
\ref{fig:1}, corresponding to the five classes of finite subgroups
of $SO(3)$. Here the labeled numbers denote indices of interior
points of the corresponding edges. $V_g/H$ can be obtained by
pasting finitely many $B^3/H$ along some $B^2/H$ in their
boundaries. It is easy to see that the singular set of a 3-orbifold $M/H$
is always a trivalent graph $\Theta$.

\begin{figure}[h]
\centerline{\scalebox{0.6}{\includegraphics{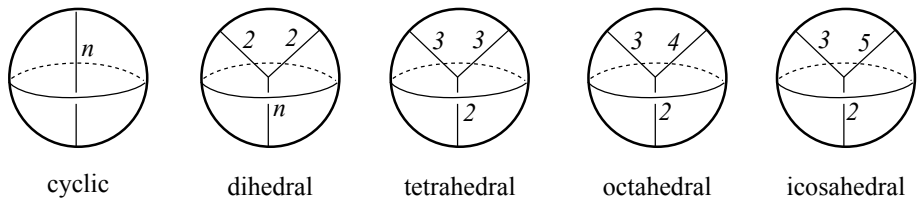}}} \caption{Five
models}\label{fig:1}
\end{figure}

Suppose $G$ acts on $(S^3, \Sigma_g)$, where $\Sigma_g$ is a closed surface. Call a 2-orbifold
$\mathcal{F}=\Sigma_g/G \subset \mathcal{O}=S^3/G$ {\it
allowable} if $|G|>4(g-1)$. A sequence of observations about
allowable 2-orbifolds were made in \cite{WWZZ} (Lemma 2.4, 2.7, 2.8,
2.9, 2.10), in particular: Suppose $\mathcal{F} \subset \mathcal{O}$
is allowable, then

(i) $|\mathcal{O}|=S^3$;

(ii) $\mathcal{F}\subset
\mathcal{O}$ is $\pi_1$-surjective;

(iii) $|\mathcal{F}|=S^2$ with
four singular points having one of the following types:
$(2,2,2,n)(n\ge 3)$, $(2,2,3,3)$, $(2,2,3,4)$, $(2,2,3,5)$;

(iv) $\mathcal{F}$ bounds a handlebody orbifold which is
a regular neighborhood of either an edge of the singular set or a
dashed arc, presented in (a) or (b) of Figure \ref{fig:2} (so a dashed arc does not belong to the singular set). Here
labels are omitted in (a), and more description of (b) will be given
later.

(i) allows us to consider only Dunbar's famous list in
\cite{Du1} of all spherical 3-orbifolds with underlying space $S^3$.
Searching for all possible 2-suborbifolds that satisfy the
conditions (ii), (iii) and (iv) by further analysis from
topological, combinatoric, numerical, and group theoretical aspects
leads to a list in Theorem 6.1 of \cite{WWZZ}, presented here as
Theorem \ref{classify}. We will first need
to explain the terminology in the statement of Theorem \ref{classify} and the notation in the accompanying tables.

\begin{figure}[h]
\centerline{\scalebox{0.5}{\includegraphics{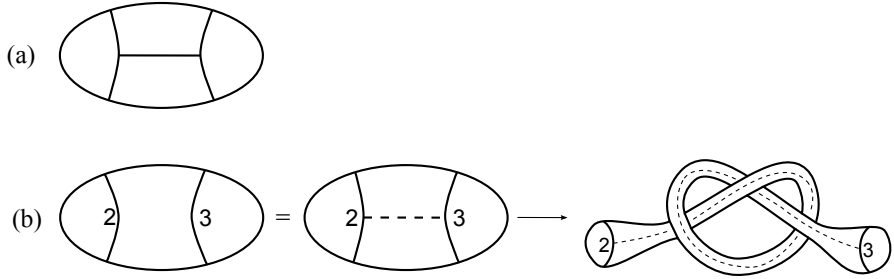}}}
\caption{Handlebody orbifolds}\label{fig:2}
\end{figure}

Since all the spherical 3-orbifolds we consider have underlying
space $S^3$, they are determined by their embedded labeled singular trivalent
graphs. From now on, a \emph{singular edge} always means an edge of
$\Theta$, the singular set of the orbifold;
singular edges with index $2$ are not labeled; and a \emph{dashed arc} is
always a regular arc with two ends at two edges of $\Theta$ with
indices 2 and 3 as in Figure \ref{fig:2}(b). An
edge/dashed arc is {\it allowable} if the boundary of its regular
neighborhood is an allowable 2-orbifold.

For each 3-orbifold $\mathcal{O}$ in the list of Theorem \ref{classify}, the order of
$\pi_1(\mathcal{O})$ is given first. Then singular edges/dashed
arcs are listed, which are marked by letters $a, b, c,...$ to denote
the boundaries of their regular neighborhoods. Then singular types
of the boundaries and genera of their pre-images in $S^3$ are given.
When the singular type is $(2, 2, 3, 3)$, there are two subtypes
denoted by I and II, corresponding to Figure \ref{fig:2}(a) and
Figure \ref{fig:2}(b) (exactly the dashed arc case).

We say that an orientable separating 2-suborbifold (2-subsurface)
$\mathcal{F}$ in an orientable 3-orbifold (3-manifold) $\mathcal{O}$
is \textit{unknotted} or \textit{knotted}, depending on whether it bounds handlebody
orbifolds (handlebodies) on both sides. A singular edge/dashed arc
is \textit{unknotted} or \textit{knotted}, depending on whether the boundary of its
regular neighborhood is unknotted or knotted.

If a marked singular edge/dashed arc is knotted, then it has a subscript `$k$'. If a marked dashed arc is unknotted, then there also
exists a knotted one (indeed infinitely many) and it has a subscript `$uk$'. Call two singular edges/dashed arcs \textit{equivalent},
if there is an orbifold automorphism sending one to the other, or
the boundaries of their regular neighborhoods as 2-orbifolds are
orbifold-isotopic.

The way to list orbifolds in Theorem \ref{classify} is influenced by the lists of \cite{Du1} and \cite{Du2}. The labels below the orbifold pictures come from \cite[Tables I, II and III]{WWZZ}. In picture 15E and picture 19, the letter $n$ refers to particular choices of parameters in infinite families.

\begin{theorem}\label{classify}
Up to equivalence, the following tables list all allowable singular
edges/dashed arcs except those of type II. In the type II case, if
there exists an allowable dashed arc in some $\mathcal{O}$, then
$\mathcal{O}$ and one such arc in it are listed, and this arc in the list will be
unknotted if there exists an unknotted one in $\mathcal{O}$.
\end{theorem}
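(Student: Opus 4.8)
The plan is to prove Theorem~\ref{classify} by an exhaustive, orbifold-by-orbifold enumeration, using the structural observations (i)--(iv) to cut the search down to a finite check. First I would invoke observation (i): since every allowable $\mathcal{F}\subset\mathcal{O}$ forces $|\mathcal{O}|=S^3$, the quotient orbifold $\mathcal{O}=S^3/G$ is a spherical $3$-orbifold with underlying space $S^3$, and hence appears on Dunbar's list in \cite{Du1}. This reduces the problem to running through that finite list together with its parametrized infinite families. For each candidate $\mathcal{O}$ I would write down its labelled singular trivalent graph $\Theta$, which by the remarks preceding the theorem completely determines $\mathcal{O}$.

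Next I would translate the allowability bound into the numerical constraint (iii). By the orbifold Euler characteristic relation $\chi(\Sigma_g)=|G|\,\chi^{\mathrm{orb}}(\mathcal{F})$, the inequality $|G|>4(g-1)$ is equivalent to $\chi^{\mathrm{orb}}(\mathcal{F})>-\tfrac12$; for a sphere carrying four cone points of orders $n_1,\dots,n_4$ this reads $\sum_i 1/n_i>3/2$, whose only solutions are the four types $(2,2,2,n)$ with $n\ge 3$, $(2,2,3,3)$, $(2,2,3,4)$ and $(2,2,3,5)$ recorded in (iii). Combining this with (iv), the allowable $\mathcal{F}$ that can occur in a fixed $\mathcal{O}$ are precisely the boundaries of regular neighbourhoods of either a singular edge of $\Theta$ (whose neighbourhood-boundary meets $\Theta$ in the four points where the two adjacent trivalent vertices send off their remaining edges, Figure~\ref{fig:2}(a)) or a dashed arc joining an index-$2$ edge to an index-$3$ edge (Figure~\ref{fig:2}(b)). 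So for each $\mathcal{O}$ I would list those edges and dashed arcs whose four boundary indices fall into one of the admissible types, discarding the rest.

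Having produced the raw list of admissible edges and arcs in each $\mathcal{O}$, I would then (a) compute, for each, the genus $g$ of its preimage surface in $S^3$ (connected by the $\pi_1$-surjectivity of (ii)) from the covering relation $2-2g=|G|\,\chi^{\mathrm{orb}}(\mathcal{F})$ with $|G|=|\pi_1(\mathcal{O})|$; (b) decide knottedness by testing whether the complement of the handlebody orbifold bounded by $\mathcal{F}$ is again a handlebody orbifold, attaching the subscript $k$ or $uk$ accordingly; and (c) collapse the list modulo the stated equivalence, namely orbifold automorphisms of $\mathcal{O}$ carrying one edge/arc to another, or isotopy of the boundary $2$-orbifolds. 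The surviving representatives are exactly the entries of Tables I--III, reproduced here.

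The main obstacle I expect is the type II case of the $(2,2,3,3)$ configuration, i.e. the dashed arcs of Figure~\ref{fig:2}(b). Unlike the singular-edge cases, here the existence of an allowable arc in a given $\mathcal{O}$ is not read off directly from $\Theta$, and the knottedness of the complementary piece can depend on the embedding of the arc rather than only on its boundary type; indeed an unknotted arc, when it exists, coexists with infinitely many knotted ones. For this reason I would not attempt a full classification up to equivalence of type II arcs, but instead settle, for each relevant $\mathcal{O}$, the two weaker questions the theorem actually asks: whether an allowable dashed arc exists at all, and, if so, whether an unknotted representative can be found, recording in the table a single arc chosen unknotted whenever possible. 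This existence-and-unknottedness analysis, carried out orbifold by orbifold, is the delicate part of the argument.
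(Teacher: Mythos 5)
Your proposal takes essentially the same route as the paper: there Theorem \ref{classify} is not reproved but quoted from Theorem 6.1 of \cite{WWZZ}, whose derivation is summarized in Section 2 exactly as you describe --- observation (i) reduces the problem to Dunbar's list of spherical $3$-orbifolds with underlying space $S^3$, and the allowable edges/dashed arcs are then enumerated in each orbifold using (ii)--(iv), with the type II arcs treated only up to existence of an allowable (and, when possible, unknotted) representative rather than a full classification. Your Euler-characteristic derivation of the admissible cone types and the genus computation via $2-2g=|G|\,\chi^{\mathrm{orb}}(\mathcal{F})$ agree with that analysis, so this is the paper's approach rather than a genuinely different one.
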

\newpage
\begin{table}[h]
\caption{Fibred case: type is $(2, 2, 3, 3)$}\label{tab:fibred2233}
\end{table}
\begin{center}
\scalebox{0.4}{\includegraphics*[0pt,0pt][152pt,152pt]{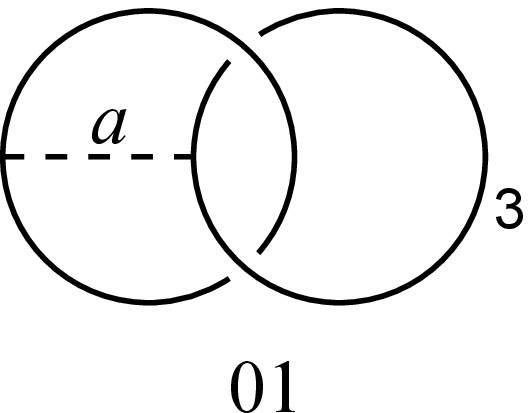}}
\raisebox{45pt} {\parbox[t]{102pt}{$|G|=6$\\$a_{uk}$: II, $g=2$}}
\scalebox{0.4}{\includegraphics*[0pt,0pt][152pt,152pt]{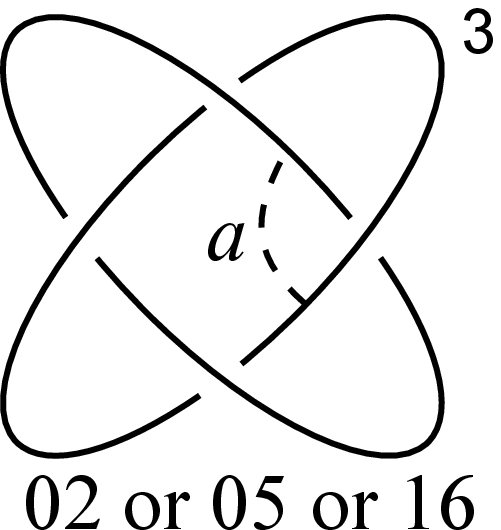}}
\raisebox{45pt} {\parbox[t]{102pt}{$|G|=18$\\$a_{uk}$: II, $g=4$}}
\end{center}

\begin{center}
\scalebox{0.4}{\includegraphics*[0pt,0pt][152pt,152pt]{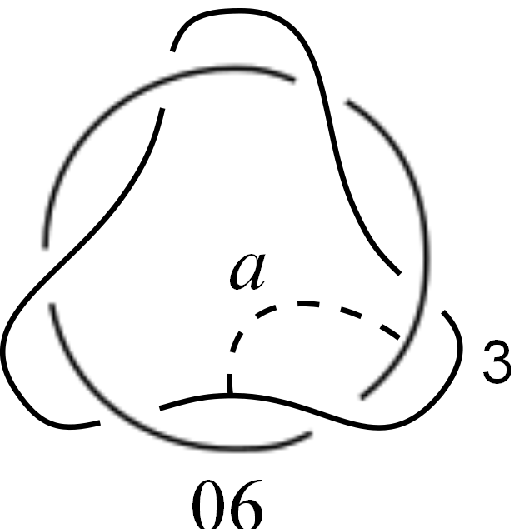}}
\raisebox{45pt} {\parbox[t]{102pt}{$|G|=48$\\$a_{uk}$: II, $g=9$}}
\scalebox{0.4}{\includegraphics*[0pt,0pt][152pt,152pt]{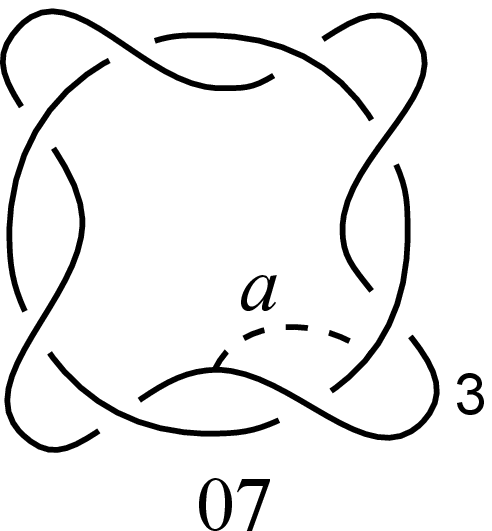}}
\raisebox{45pt} {\parbox[t]{102pt}{$|G|=144$\\$a_{uk}$: II, $g=25$}}
\end{center}

\begin{center}
\scalebox{0.4}{\includegraphics*[0pt,0pt][152pt,152pt]{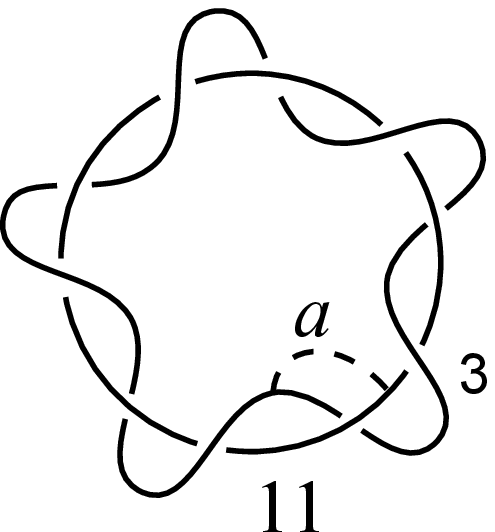}}
\raisebox{45pt} {\parbox[t]{102pt}{$|G|=720$\\$a_{uk}$: II,
$g=121$}}
\scalebox{0.4}{\includegraphics*[0pt,0pt][152pt,152pt]{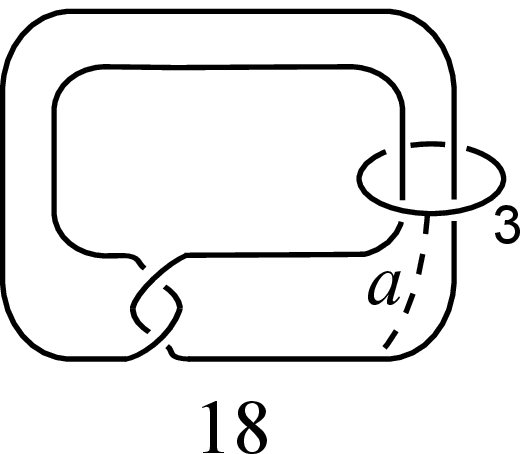}}
\raisebox{45pt} {\parbox[t]{102pt}{$|G|=144$\\$a_{uk}$: II, $g=25$}}
\end{center}

\begin{center}
\scalebox{0.4}{\includegraphics*[0pt,0pt][152pt,152pt]{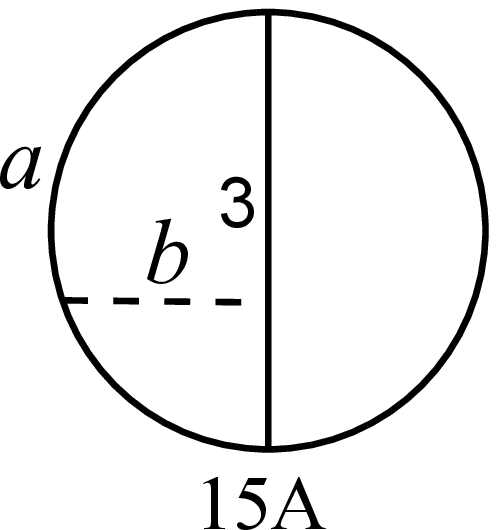}}
\raisebox{45pt} {\parbox[t]{102pt}{$|G|=6$\\$a$: I, $g=2$\\$b_{uk}$:
II, $g=2$}}
\scalebox{0.4}{\includegraphics*[0pt,0pt][152pt,152pt]{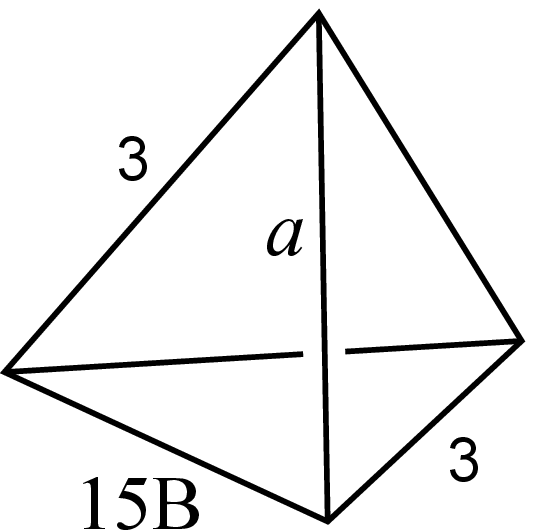}}
\raisebox{45pt} {\parbox[t]{102pt}{$|G|=18$\\$a$: I, $g=4$}}
\end{center}

\begin{center}
\scalebox{0.4}{\includegraphics*[0pt,0pt][152pt,152pt]{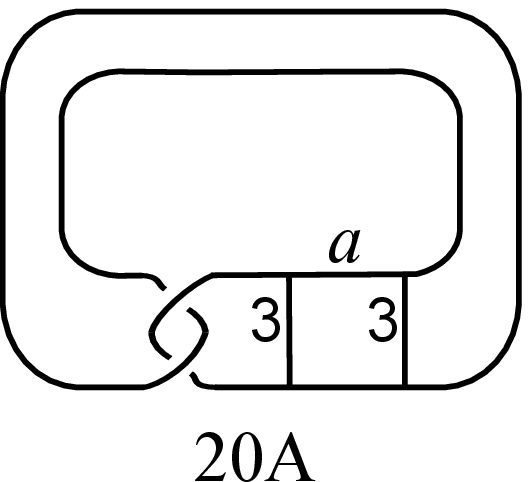}}
\raisebox{45pt} {\parbox[t]{102pt}{$|G|=144$\\$a$: I, $g=25$}}
\scalebox{0.4}{\includegraphics*[0pt,0pt][152pt,152pt]{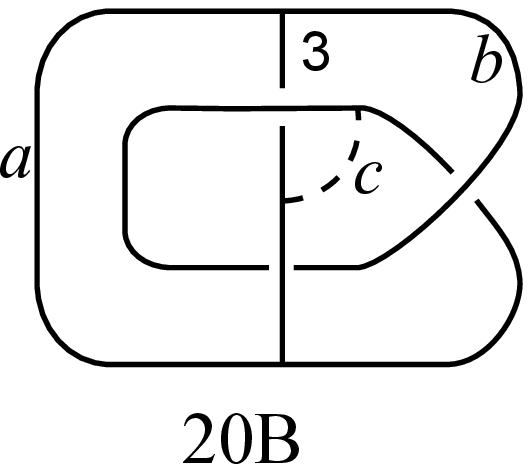}}
\raisebox{45pt} {\parbox[t]{102pt}{$|G|=48$\\$a$: I, $g=9$\\$b_k$:
I, $g=9$\\$c_{uk}$: II, $g=9$}}
\end{center}

\begin{center}
\scalebox{0.4}{\includegraphics*[0pt,0pt][152pt,152pt]{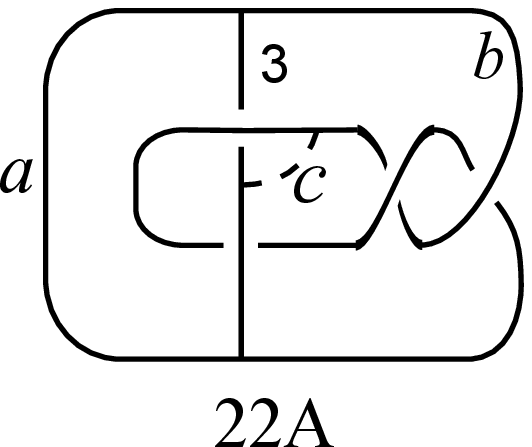}}
\raisebox{45pt} {\parbox[t]{102pt}{$|G|=720$\\$a$: I,
$g=121$\\$b_k$: I, $g=121$\\$c_{uk}$: II, $g=121$}} \hspace*{170pt}
\end{center}

\begin{table}[h]
\caption{Fibred case: type is not $(2, 2, 3,
3)$}\label{tab:fibnot2233}
\end{table}
\begin{center}
\scalebox{0.4}{\includegraphics*[0pt,0pt][152pt,152pt]{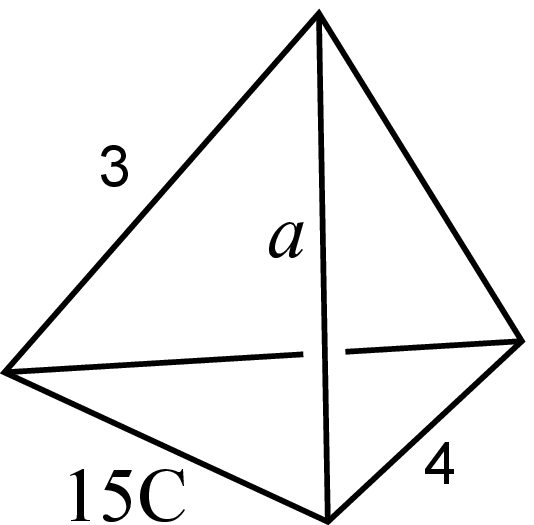}}
\raisebox{45pt} {\parbox[t]{102pt}{$|G|=24$\\$a$:(2,2,3,4), $g=6$}}
\scalebox{0.4}{\includegraphics*[0pt,0pt][152pt,152pt]{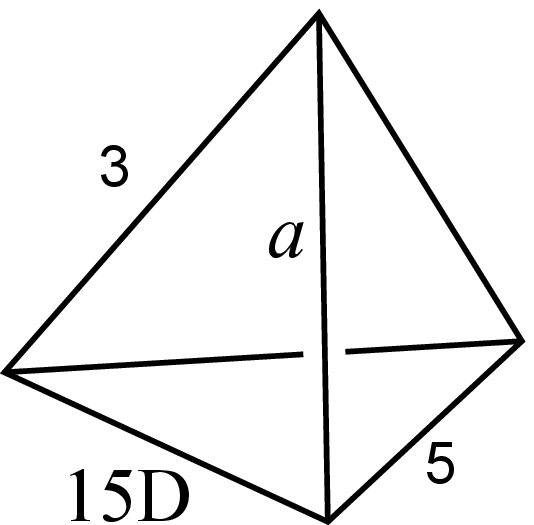}}
\raisebox{45pt} {\parbox[t]{102pt}{$|G|=30$\\$a$:(2,2,3,5), $g=8$}}
\end{center}

\begin{center}
\scalebox{0.4}{\includegraphics*[0pt,0pt][152pt,152pt]{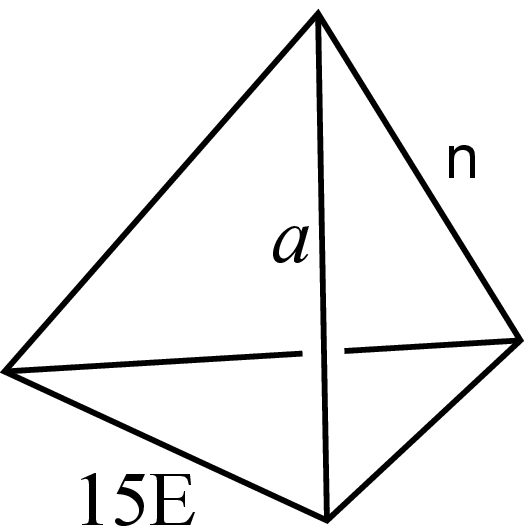}}
\raisebox{45pt}{\parbox[t]{102pt}{$|G|=4n$\\$a$:(2,2,2,n)\\$g=n-1$}}
\scalebox{0.4}{\includegraphics*[0pt,0pt][152pt,152pt]{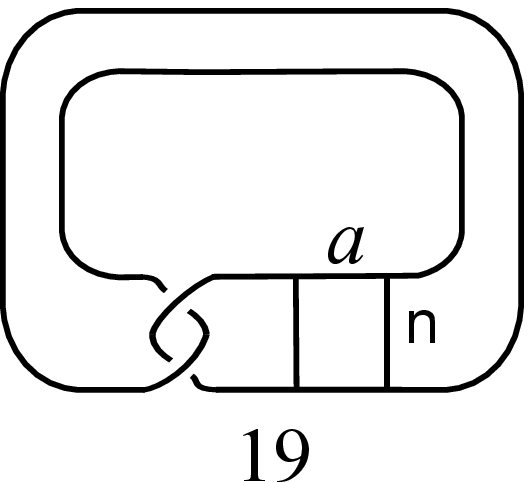}}
\raisebox{45pt}{\parbox[t]{102pt}{$|G|=4n^2$\\$a$:(2,2,2,n)\\$g=(n-1)^2$}}
\end{center}

\begin{center}
\scalebox{0.4}{\includegraphics*[0pt,0pt][152pt,152pt]{d20C.eps}}
\raisebox{45pt} {\parbox[t]{102pt}{$|G|=96$\\$a$:(2,2,2,3),
$g=9$\\$b_{k}$:(2,2,2,3), $g=9$}}
\scalebox{0.4}{\includegraphics*[0pt,0pt][152pt,152pt]{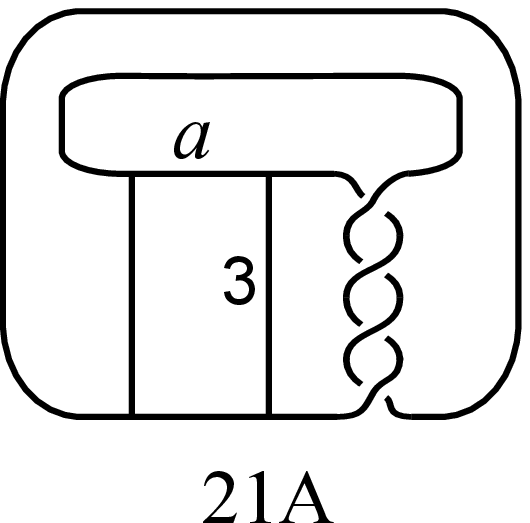}}
\raisebox{45pt} {\parbox[t]{102pt}{$|G|=288$\\$a$:(2,2,2,3),
$g=25$}}
\end{center}

\begin{center}
\scalebox{0.4}{\includegraphics*[0pt,0pt][152pt,152pt]{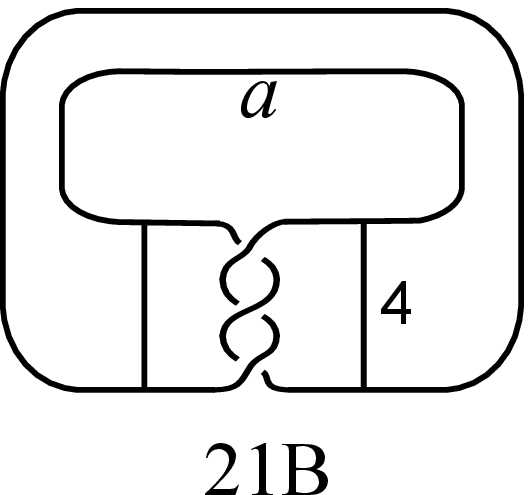}}
\raisebox{45pt} {\parbox[t]{102pt}{$|G|=384$\\$a$:(2,2,2,4),
$g=49$}}
\scalebox{0.4}{\includegraphics*[0pt,0pt][152pt,152pt]{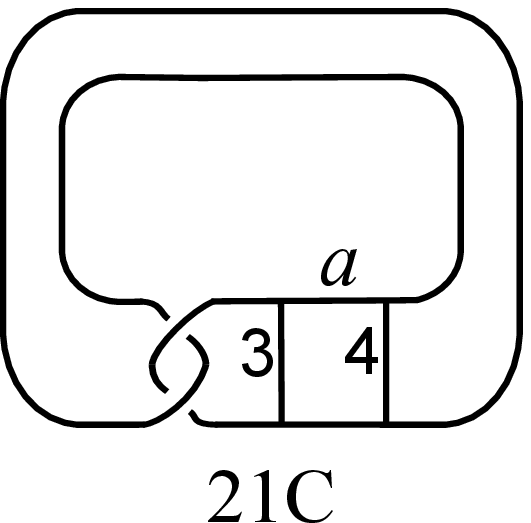}}
\raisebox{45pt} {\parbox[t]{102pt}{$|G|=576$\\$a$:(2,2,3,4),
$g=121$}}
\end{center}

\begin{center}
\scalebox{0.4}{\includegraphics*[0pt,0pt][152pt,152pt]{d22B.eps}}
\raisebox{45pt} {\parbox[t]{102pt}{$|G|=1440$\\$a$:(2,2,2,3),
$g=121$\\$b_{k}$:(2,2,2,3), $g=121$}}
\scalebox{0.4}{\includegraphics*[0pt,0pt][152pt,152pt]{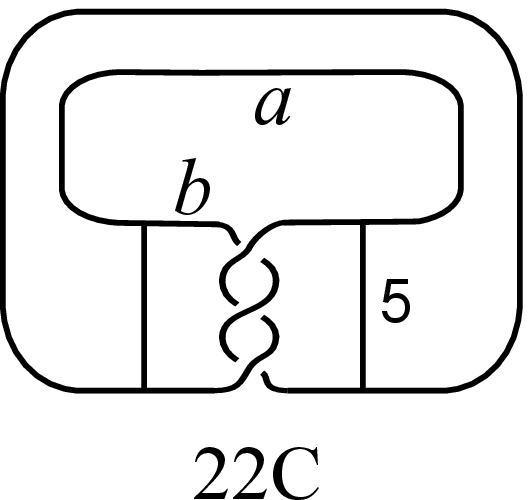}}
\raisebox{45pt} {\parbox[t]{102pt}{$|G|=2400$\\$a$:(2,2,2,5),
$g=361$\\$b_{k}$:(2,2,2,5), $g=361$}}
\end{center}

\begin{center}
\scalebox{0.4}{\includegraphics*[0pt,0pt][152pt,152pt]{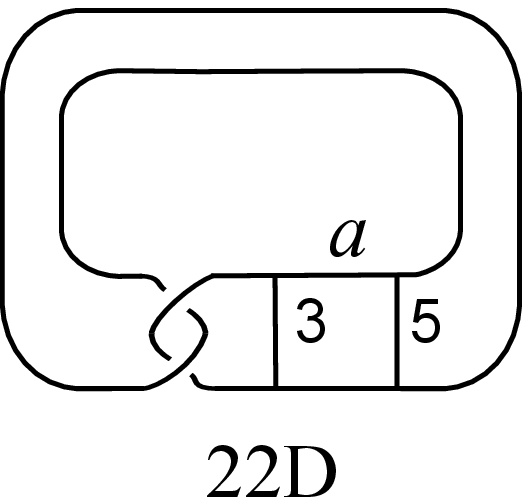}}
\raisebox{45pt} {\parbox[t]{102pt}{$|G|=3600$\\$a$:(2,2,3,5),
$g=841$}}\hspace*{170pt}
\end{center}

\begin{table}[h]
\caption{Non-fibred case}\label{tab:nonfibre}
\end{table}
\begin{center}
\scalebox{0.4}{\includegraphics*[0pt,0pt][152pt,152pt]{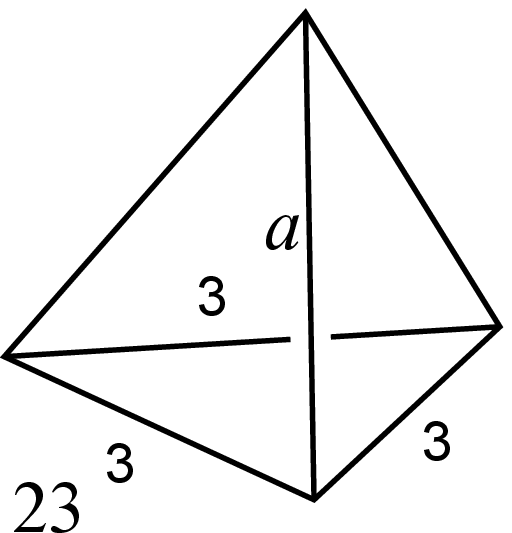}}
\raisebox{45pt} {\parbox[t]{102pt}{$|G|=96$\\$a$: I, $g=17$}}
\scalebox{0.4}{\includegraphics*[0pt,0pt][152pt,152pt]{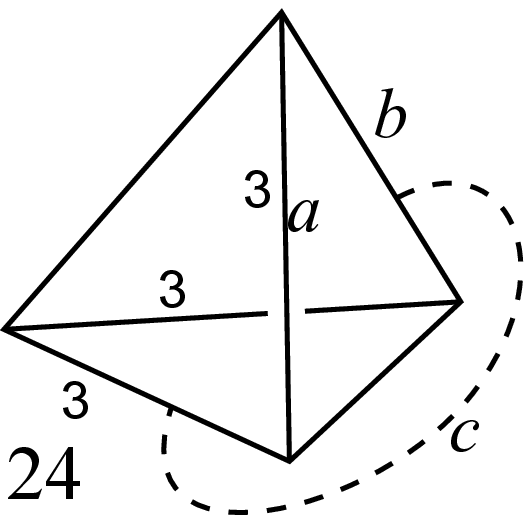}}
\raisebox{45pt} {\parbox[t]{102pt}{$|G|=60$\\$a$:(2,2,2,3),
$g=6$\\$b$: I, $g=11$\\$c_{k}$: II, $g=11$}}
\end{center}

\begin{center}
\scalebox{0.4}{\includegraphics*[0pt,0pt][152pt,152pt]{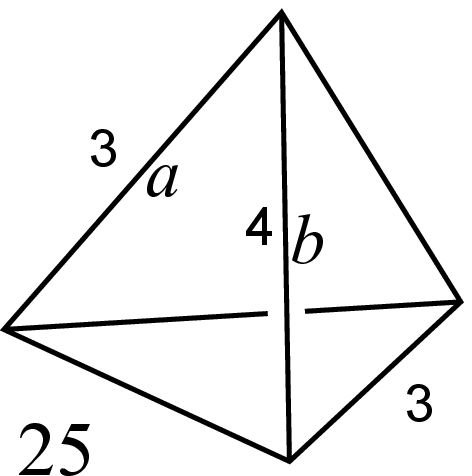}}
\raisebox{45pt} {\parbox[t]{102pt}{$|G|=576$\\$a$:(2,2,2,4),
$g=73$\\$b$: I, $g=97$}}
\scalebox{0.4}{\includegraphics*[0pt,0pt][152pt,152pt]{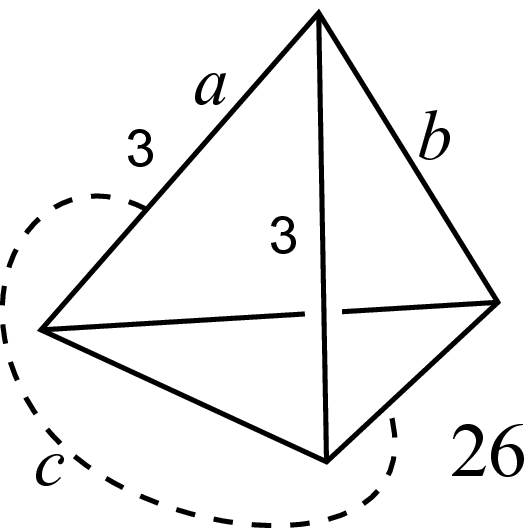}}
\raisebox{45pt} {\parbox[t]{102pt}{$|G|=24$\\$a$:(2,2,2,3),
$g=3$\\$b$: I, $g=5$\\$c_k$: II, $g=5$}}
\end{center}

\begin{center}
\scalebox{0.4}{\includegraphics*[0pt,0pt][152pt,152pt]{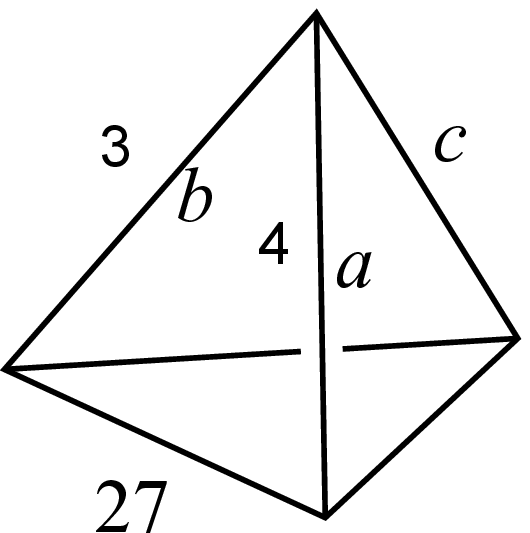}}
\raisebox{45pt} {\parbox[t]{102pt}{$|G|=48$\\$a$:(2,2,2,3),
$g=5$\\$b$:(2,2,2,4), $g=7$\\$c$:(2,2,3,4), $g=11$}}
\scalebox{0.4}{\includegraphics*[0pt,0pt][152pt,152pt]{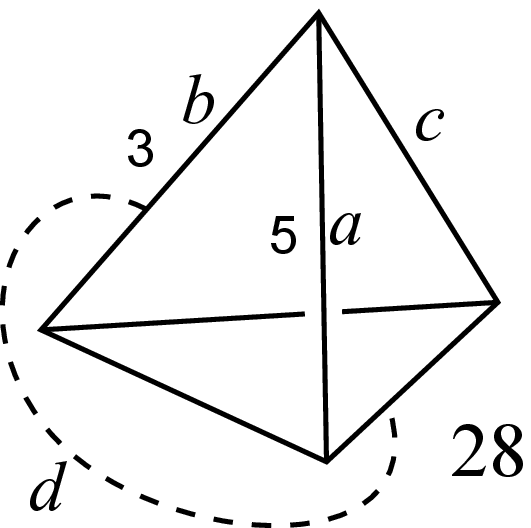}}
\raisebox{45pt} {\parbox[t]{102pt}{$|G|=120$\\$a$:(2,2,2,3),
$g=11$\\$b$:(2,2,2,5), $g=19$\\$c$:(2,2,3,5), $g=29$\\$d_{k}$: II,
$g=21$}}
\end{center}

\begin{center}
\scalebox{0.4}{\includegraphics*[0pt,0pt][152pt,152pt]{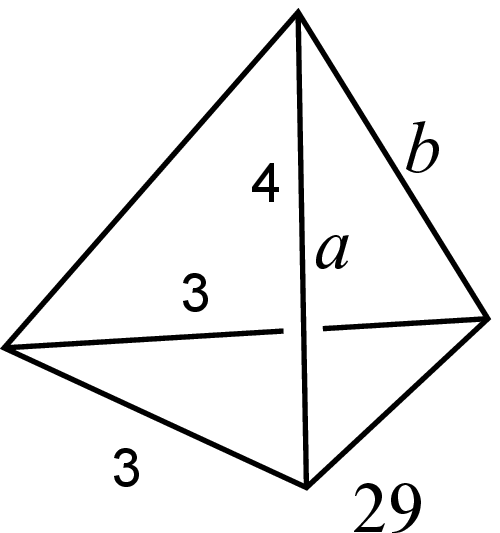}}
\raisebox{45pt} {\parbox[t]{102pt}{$|G|=192$\\$a$:(2,2,2,3),
$g=17$\\$b$:(2,2,3,4), $g=41$}}
\scalebox{0.4}{\includegraphics*[0pt,0pt][152pt,152pt]{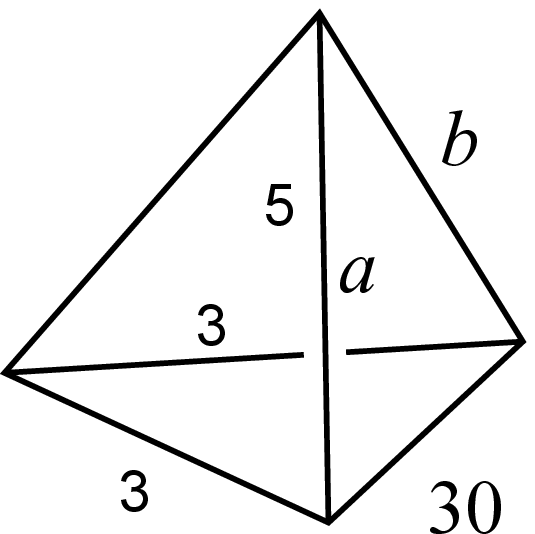}}
\raisebox{45pt} {\parbox[t]{102pt}{$|G|=7200$\\$a$:(2,2,2,3),
$g=601$\\$b$:(2,2,3,5), $g=1681$}}
\end{center}

\begin{center}
\scalebox{0.4}{\includegraphics*[0pt,0pt][152pt,152pt]{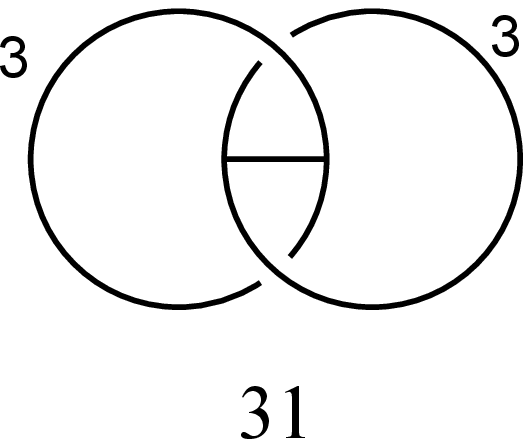}}
\raisebox{45pt} {\parbox[t]{102pt}{$|G|=288$\\No allowable\\
2-suborbifold}}
\scalebox{0.4}{\includegraphics*[0pt,0pt][152pt,152pt]{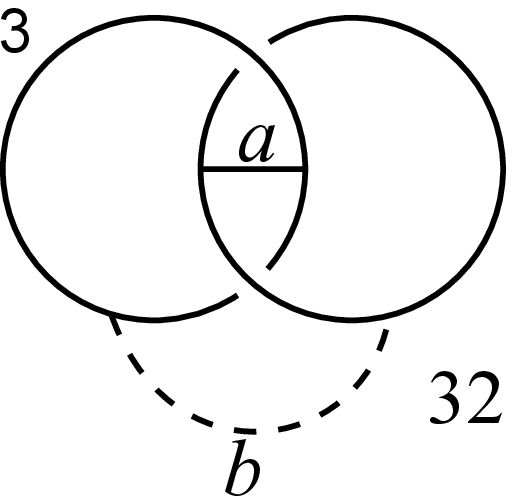}}
\raisebox{45pt} {\parbox[t]{102pt}{$|G|=24$\\$a$: I,
$g=5$\\$b_{uk}$: II, $g=5$}}
\end{center}

\begin{center}
\scalebox{0.4}{\includegraphics*[0pt,0pt][152pt,152pt]{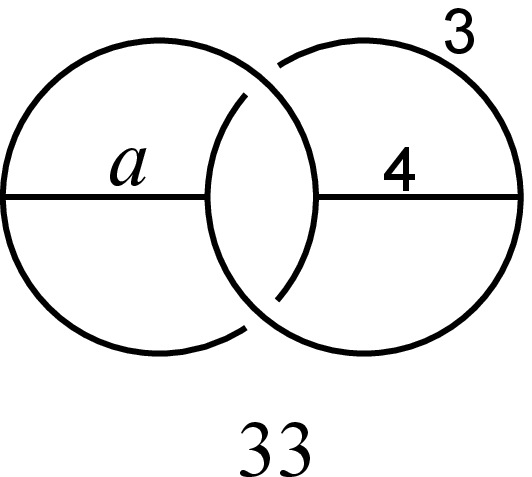}}
\raisebox{45pt} {\parbox[t]{102pt}{$|G|=1152$\\$a$:(2,2,2,3),
$g=97$}}
\scalebox{0.4}{\includegraphics*[0pt,0pt][152pt,152pt]{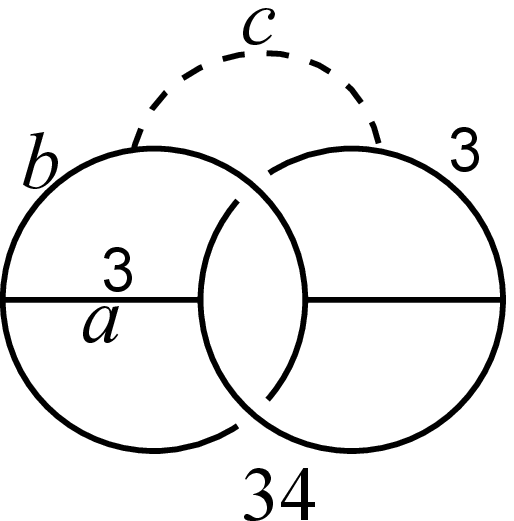}}
\raisebox{45pt} {\parbox[t]{102pt}{$|G|=120$\\$a$:(2,2,2,3),
$g=11$\\$b_{k}$:(2,2,2,3), $g=11$\\$c_{k}$: II, $g=21$}}
\end{center}

\begin{center}
\scalebox{0.4}{\includegraphics*[0pt,0pt][152pt,152pt]{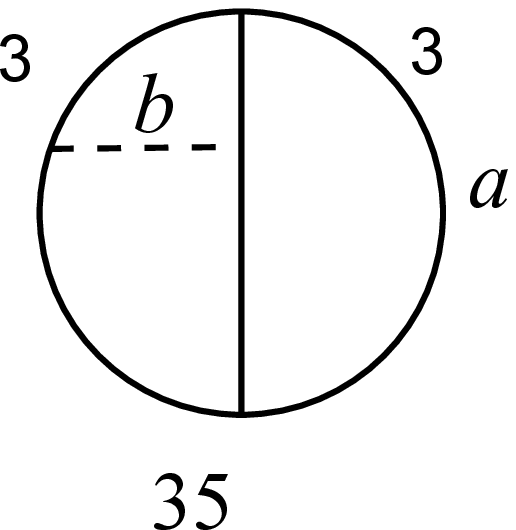}}
\raisebox{45pt} {\parbox[t]{102pt}{$|G|=12$\\$a$: I,
$g=3$\\$b_{uk}$: II, $g=3$}}
\scalebox{0.4}{\includegraphics*[0pt,0pt][152pt,152pt]{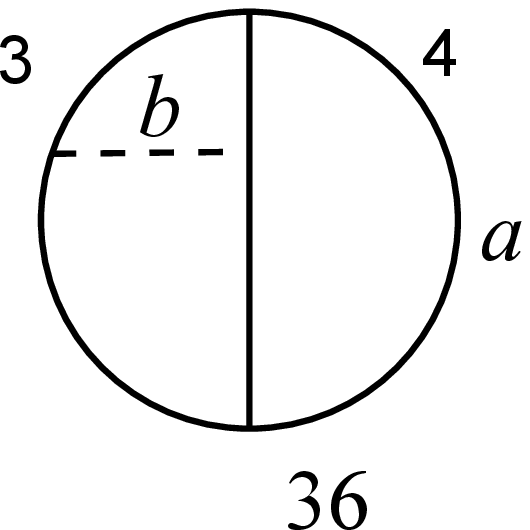}}
\raisebox{45pt} {\parbox[t]{102pt}{$|G|=24$\\$a$: I,
$g=5$\\$b_{uk}$: II, $g=5$}}
\end{center}

\begin{center}
\scalebox{0.4}{\includegraphics*[0pt,0pt][152pt,152pt]{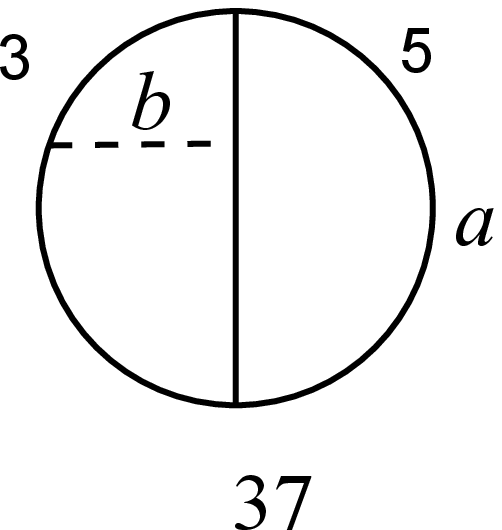}}
\raisebox{45pt} {\parbox[t]{102pt}{$|G|=60$\\$a$: I,
$g=11$\\$b_{uk}$: II, $g=11$}}
\scalebox{0.4}{\includegraphics*[0pt,0pt][152pt,152pt]{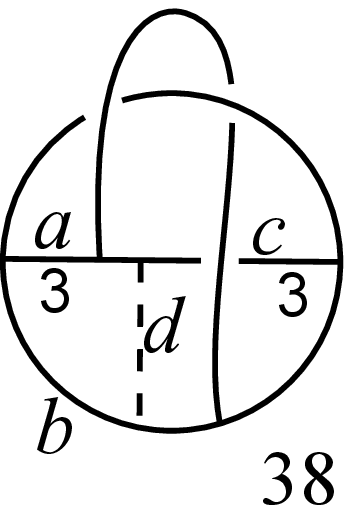}}
\raisebox{45pt} {\parbox[t]{102pt}{$|G|=2880$\\$a, b_k,
c_k$:(2,2,2,3)\\$g=241$\\$d_k$: II, $g=481$}}
\end{center}

\begin{center}
\scalebox{0.4}{\includegraphics*[0pt,0pt][152pt,152pt]{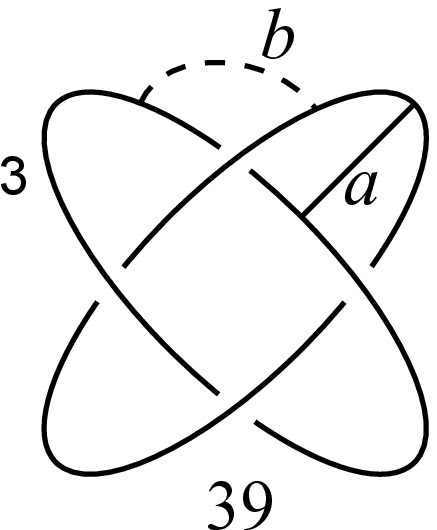}}
\raisebox{45pt} {\parbox[t]{102pt}{$|G|=576$\\$a$: I,
$g=97$\\$b_{uk}$: II, $g=97$}}
\scalebox{0.4}{\includegraphics*[0pt,0pt][152pt,152pt]{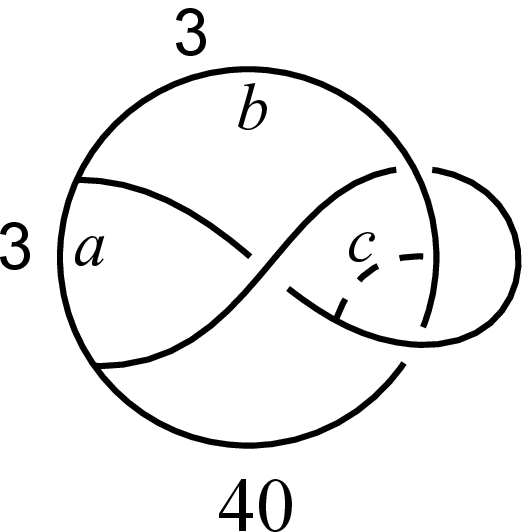}}
\raisebox{45pt} {\parbox[t]{102pt}{$|G|=1440$\\$a$: I,
$g=241$\\$b_k$: I, $g=241$\\$c_{uk}$: II, $g=241$}}
\end{center}


\section{The case of bordered surfaces: Edges in 3-orbifolds providing bordered surface embeddings with maximum symmetry}

Suppose now that $G$ is a finite group acting on some bordered surface $\Sigma$; then each singular point in the orbifold $X=\Sigma/G$ is of one of three types (see Figure 3): (a) an isolated singular point lying in the inner of $X$, corresponding to a cyclic stable subgroup; (b) a singular point lying on a reflection boundary, corresponding to a $\Z_2$ stable subgroup; (c) a corner point, corresponding to a $D_n$ dihedral stable subgroup.
\begin{center}
\centerline{ \scalebox{1.2}{\includegraphics{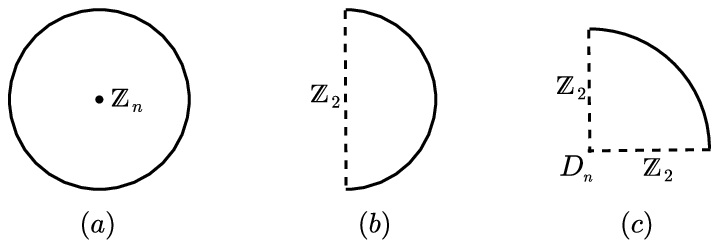}}}
Figure 3 Possible singular points of 2-orbifolds
\end{center}

We first recall an example of \cite{WWZZ}, which gives a lower bound for $m_\a$.

\begin{example}\label{general-I}
For every  $\a>1$, we will construct a
group $G$ of order $4(\a+1)$ which acts on $(S^3, \Sigma_{0,\a+1})$.
Let $\Sigma_{0,\a+1}$ be  the equator sphere $S^2$ of $S^3$ with $\a+1$
punctured holes, see Figure 4 for $\a=4$. We choose the holes all on the equator $S^1$ of $S^2$, centered at
the vertices of a regular $\a+1$-polygon. There is a dihedral group
$D_{\a+1}$ acting on $(S^3, S^2)$ which keeps $\Sigma_{0,\a+1}$ invariant. And there is also a $\Z_2$ action on $S^3$ changing the inner and outer
of $S^2$, whose fixed point set is the equator of the 2-sphere in Figure 4. So there is a $D_{\a+1}\times
\Z_2$ action on $\Sigma_{0,\a+1}$. This group has order $4(\a+1)$ and corresponds to the orbifold 15E in Table 2 of Theorem 2.1.
\begin{center}
\scalebox{0.9}{\includegraphics{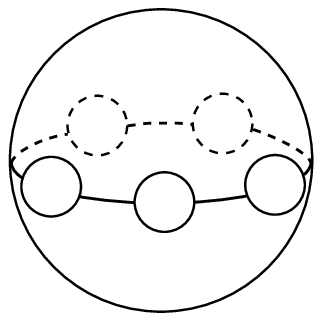}}

Figure 4 The 2-sphere with five boundary components
\end{center}

\end{example}

Suppose $G$ acts on $(S^3, \Sigma)$. Consider a $G$-equivariant regular neighborhood of $\Sigma$ which is a handlebody $V_\a$; then $G$ acts on $(S^3, V_\a)$. By the previous example, if this action realizes $m_\a$, then we have $|G| \geq 4(\a+1)>4(\a-1)$.
So as discussed in Section 2, $V_\a/G$ can be of two types as in Figure 2.

If $V_\a/G$ is of type (a) then since $V_\a/G$ is a regular neighborhood of $X$, the two degree-$3$ singular points (i.e., valence three vertices of the singular trivalent graphs) together with the middle singular edge joining them must lie in $X$. So the singular edge corresponds to a reflection boundary of $X$, and the two degree-$3$ singular points correspond to two corner points of $X$. So in this case, $X$ is a disk with three reflection boundaries and two corner points. Consider $\partial X = \partial \Sigma /G$; this boundary arc is isotopy to an arc on the boundary of $V_\a/G$, and it is an arc joining two singular points on $\partial V_\a/G$. $\partial V_\a/G$ is a sphere with four singular points, and we are now considering an arc joining two of them and the arc is disjoint from the other two. Since a sphere minus two points has fundamental group $\Z$, the isotopy class of  $\partial X$ is in the sequence indicated in Figure 5(a).

If $V_\a/G$ is of type (b) then the index-$2$ singular arc may be either a reflection boundary of $X$ or intersect $X$ in an isolated point, and the index-$3$ singular point must be isolated in $X$. So there are two cases as shown in Figure 5(b).

\begin{center}
\scalebox{0.6}{\includegraphics{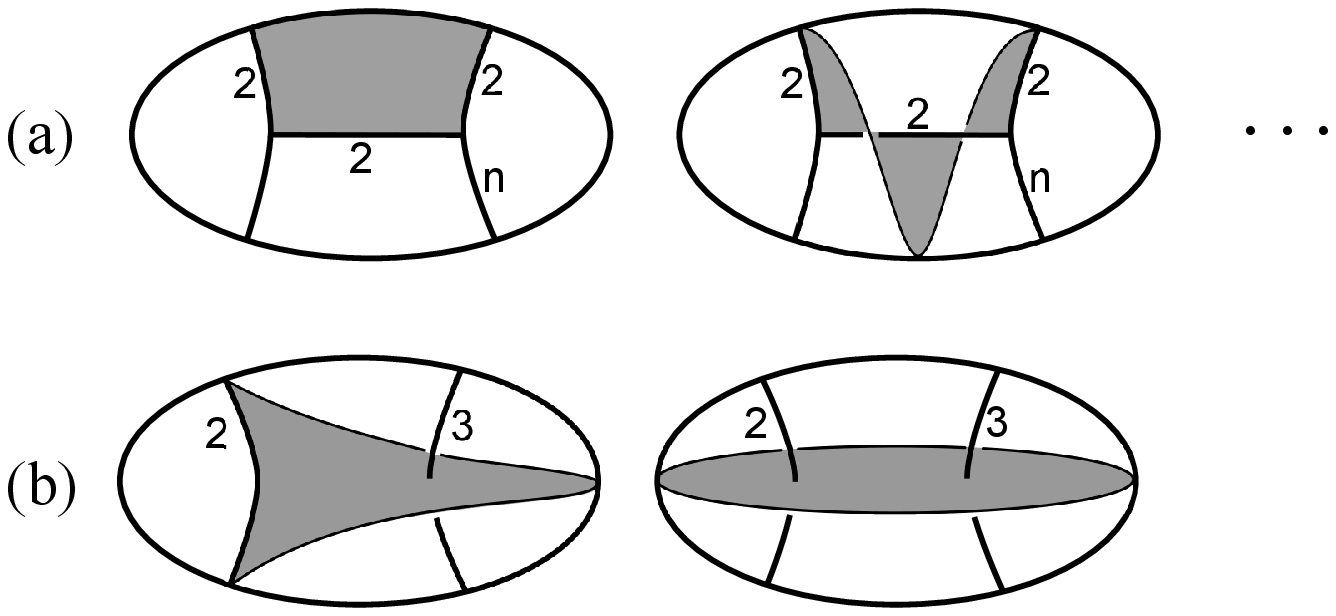}}

Figure 5 Possible embeddings of 2-orbifolds into 3-orbifolds
\end{center}

From the discussion above, not all the allowable orbifolds given in Theorem \ref{classify} corresponds to some bordered surface orbifold $X$. For type (I), we have seen that if the singular edge corresponds to some $X$, then it must have index $2$, and the stable subgroups corresponding to its two ends must be dihedral groups.

Call two singular edges/dashed arcs \textit{equivalent} if there is an
orbifold automorphism sending one to the other. However, if two (nonequivalent) singular edges/dashed arcs have regular neighborhoods with isotopic boundaries (so that an unknotted $2$-orbifold splits the spherical $3$-orbifold into two handlebody orbifolds), then we say that the edges/arcs are \textit{dual} to each other.

In the study of maximum symmetry of closed surfaces, it is
reasonable to call two dual edges equivalent since they produce
the same allowable $2$-orbifold. But it is not good to call them
equivalent in the study of maximum symmetry of bordered surfaces
since they may correspond to different $2$-orbifolds or to
different bordered surfaces.

Then by a routine checking of Theorem \ref{classify}, we have the following Theorem \ref{position}
for our further study of the realizations of the maximum symmetry of
$(S^3, \Sigma)$ in the following sense: (1) we only pick information
from Theorem \ref{classify} related to $m_\a$ (so among 40 orbifolds
listed in Theorem \ref{classify}, only 16 orbifolds appear in
Theorem \ref{position});  and list the orbifolds according to the
sizes of $m_\a$ (so one figure in Theorem \ref{classify} can become
several figures in Theorem \ref{position}, for example, 15E or 27). (2) In Table 4 of Theorem 3.2, $a'$, $b'$, ... denote edges which are dual to the edges $a$, $b$, ... in the tables of Theorem 2.1; for example, the edge $a$ of orbifold 26 in Table 3 of Theorem 2.1 is not allowable since it has singular order three but its dual edge $a'$ has singular order two and is allowable (see Table 4 of Theorem 3.2).

The topological type of the bordered surfaces are also listed in the table. The method to determine their topological genus and the number of boundary components will be explained in the next section.

\begin{theorem}\label{position}
Up to equivalent embeddings, all the edges/arcs corresponding to
bordered surfaces with maximum symmetry are listed below (for dashed arcs only one position is showed).
\end{theorem}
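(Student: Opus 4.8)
The plan is to turn the local analysis carried out just above into explicit combinatorial conditions on the singular graphs of Theorem \ref{classify}, and then to run through that list orbifold by orbifold, keeping only the edges/arcs that both satisfy these conditions and realize the maximum order. First I would record precisely what the two pictures of $V_\a/G$ in Figure 2 force. In the type-(a) picture (equivalently, the solid edge of type I) a singular edge can be the core of a bordered-surface orbifold $X$ only if it has index $2$ and both of its trivalent endpoints have dihedral stable subgroups, so that the edge becomes a reflection boundary and the two vertices become corner points of a disk $X$ with three reflection boundaries; the isotopy class of $\partial X$ then lies in the sequence of Figure 5(a). In the type-(b) picture the analogous conditions are read off from the two sub-cases of Figure 5(b), with the index-$3$ singular point forced to meet $X$ in an isolated point. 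All of these are local conditions, testable directly against each labelled singular graph.

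Next I would go through the $40$ orbifolds of Theorem \ref{classify} one at a time and, for each listed allowable edge or dashed arc, check the conditions above. An edge of index $3$ is immediately excluded as the core of a type-I bordered surface; but---and this is the crux of the preceding dual-edge discussion---such an edge may have a \emph{dual} edge of index $2$ that does qualify, exactly as in orbifold $26$, where $a$ has order three while its dual $a'$ has order two and is allowable. So for every unknotted $2$-orbifold that splits a spherical $3$-orbifold into two handlebody orbifolds I would examine \emph{both} cores it bounds and record them separately, rather than identifying them as was legitimate in the closed-surface setting of \cite{WWZZ}. This is where the bookkeeping is delicate: edges merged in \cite{WWZZ} must be split apart, yet genuinely equivalent edges must not be listed twice.

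Finally I would keep only those surviving edges/arcs whose group order equals $m_\a$ at the relevant algebraic genus. Here the lower bound $|G|\ge 4(\a+1)$ from Example \ref{general-I} combined with the upper bounds $12(\a-1),\,8(\a-1),\,20(\a-1)/3,\,6(\a-1),\dots$ for symmetries of bordered surfaces recalled in the introduction lets me compare the order attached to each candidate against the maximum at its $\a$. This comparison is precisely what reduces the $40$ orbifolds of Theorem \ref{classify} to the $16$ entries of the table below, and it also accounts for a single figure of Theorem \ref{classify} (such as $15E$ or $27$) reappearing as several entries sorted by the size of $m_\a$. The topological type recorded in the last column is not proved here; computing the topological genus, the number of boundary components, and orientability of each $X$ is deferred to the next section.

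I expect the main obstacle to be completeness and correctness of this case analysis---above all the duality bookkeeping---rather than any single hard estimate. Each structural and numerical condition is easy to apply in isolation, but one must verify that every allowable edge of every one of the $40$ orbifolds has been tested, that every dual pair has been correctly separated, and that no two equivalent edges have been double-counted.
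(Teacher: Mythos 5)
Your proposal follows essentially the same route as the paper: the same local analysis from Figures 2 and 5 (index-$2$ edges with dihedral endpoints for type I, the two sub-cases for dashed arcs), the same careful separation of dual edges (with the identical orbifold-$26$ example of $a$ versus $a'$), and the same filtering of the $40$ orbifolds of Theorem \ref{classify} by comparison with $m_\a$, deferring the topological-type computations to the next section. This matches the paper's "routine checking" argument, including its acknowledgment that the real work lies in the completeness of the case-by-case bookkeeping.
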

\begin{table}[h]
\caption{Allowable edges/arcs corresponding to
bordered surfaces with maximum symmetry}\label{tab:allow}
\end{table}
\centerline{$|G|=12(\a-1)$}
\begin{center}
\scalebox{0.4}{\includegraphics*[0pt,0pt][152pt,152pt]{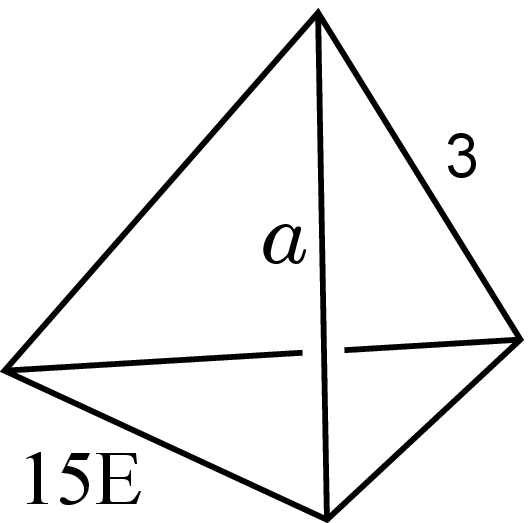}}
\hspace{10pt}\raisebox{35pt} {\parbox[t]{102pt}{$|G|=12$\\$\a=2\\ \Sigma_{0,3},\Sigma_{1,1}$}}
\scalebox{0.4}{\includegraphics*{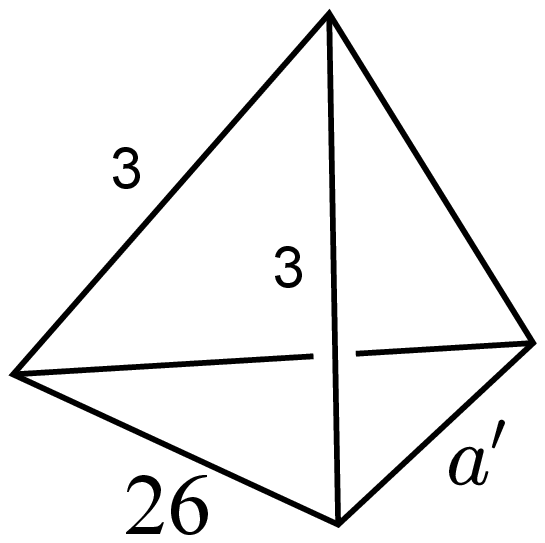}}
\hspace{10pt}\raisebox{35pt} {\parbox[t]{102pt}{$|G|=24$\\$\a=3$\\$\Sigma_{0,4},\Sigma^-_{1,3}$}}
\end{center}

\begin{center}
\scalebox{0.4}{\includegraphics*[0pt,0pt][152pt,152pt]{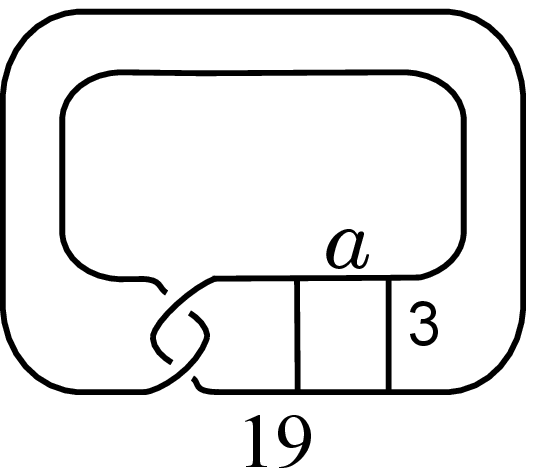}}
\hspace{10pt}\raisebox{35pt} {\parbox[t]{102pt}{$|G|=36$\\$\a=4\\\Sigma_{1,3}$}}
\scalebox{0.4}{\includegraphics*{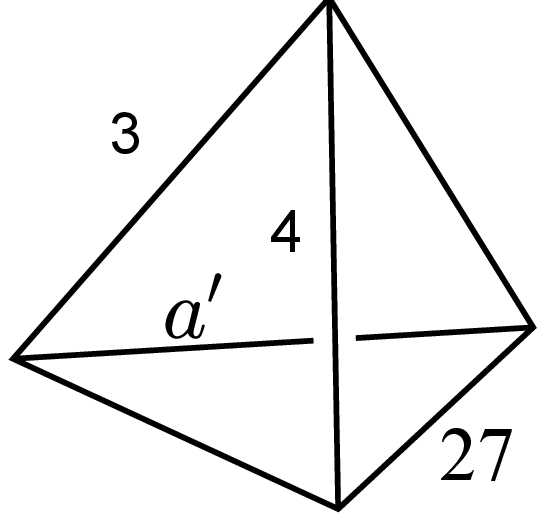}}
\hspace{10pt}\raisebox{35pt} {\parbox[t]{102pt}{$|G|=48$\\$\a=5\\ \Sigma_{0,6}, \Sigma_{1,4}$}}
\end{center}

\begin{center}
\scalebox{0.4}{\includegraphics*[0pt,0pt][152pt,152pt]{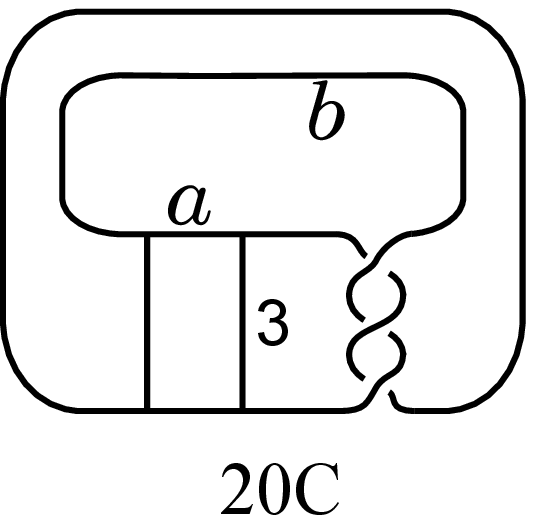}}
\hspace{10pt}\raisebox{35pt} {\parbox[t]{102pt}{$|G|=96$\\$\a=9\\ \Sigma_{2,6}, \Sigma_{3,4}$}}
\scalebox{0.4}{\includegraphics*{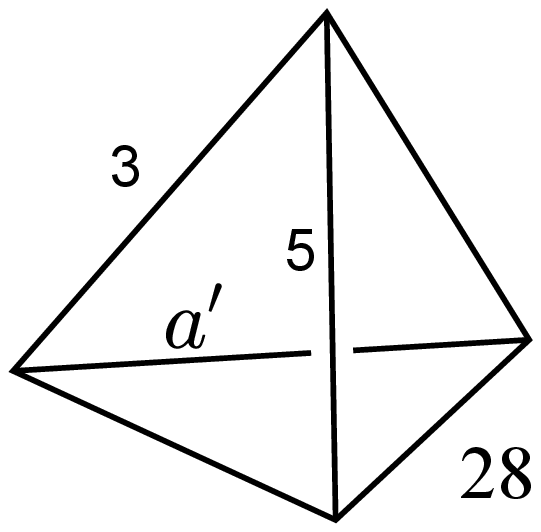}}
\hspace{10pt}\raisebox{35pt} {\parbox[t]{102pt}{$|G|=120$\\$\a=11\\\Sigma_{0,12},\Sigma^-_{6,6}$}}
\end{center}

\begin{center}
\scalebox{0.4}{\includegraphics*{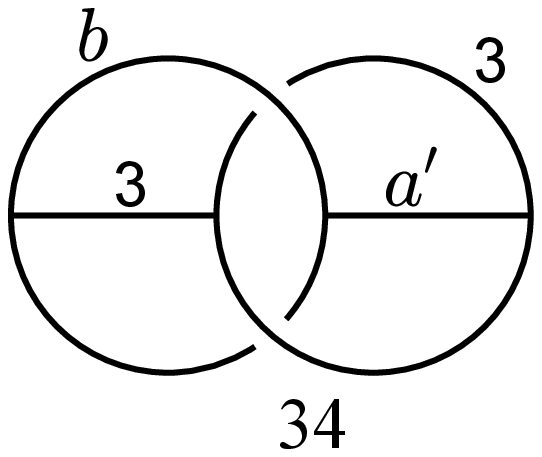}}
\hspace{10pt}\raisebox{35pt} {\parbox[t]{102pt}{$|G|=120$\\$\a=11\\\Sigma_{0,12},\Sigma^-_{6,6}$}}
\scalebox{0.4}{\includegraphics*[0pt,0pt][152pt,152pt]{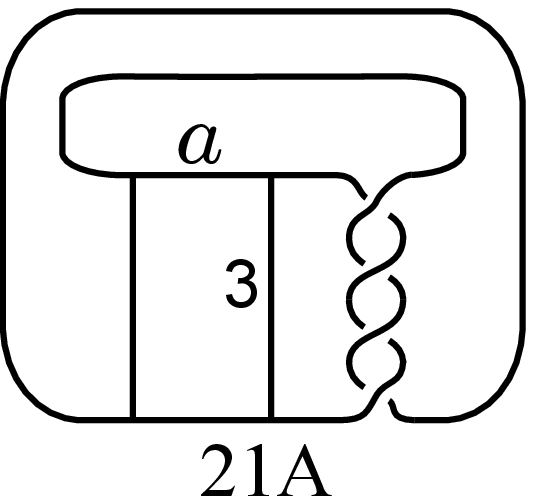}}
\hspace{10pt}\raisebox{35pt} {\parbox[t]{102pt}{$|G|=288$\\$\a=25\\\Sigma_{7,12},\Sigma_{10,6}$}}
\end{center}

\begin{center}
\scalebox{0.4}{\includegraphics*{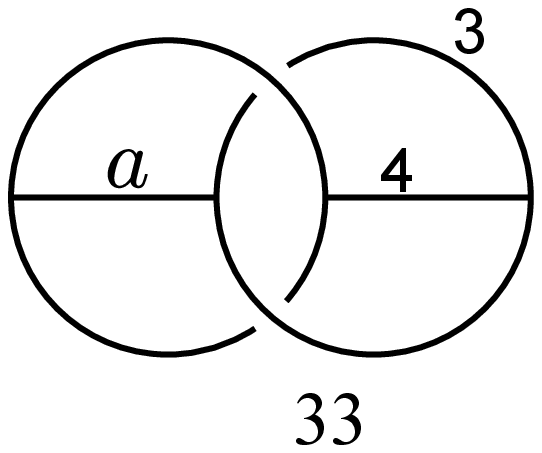}}
\hspace{10pt}\raisebox{35pt} {\parbox[t]{102pt}{$|G|=1152$\\$\a=97\\\Sigma_{37,24}$}}
\scalebox{0.4}{\includegraphics*[0pt,0pt][152pt,152pt]{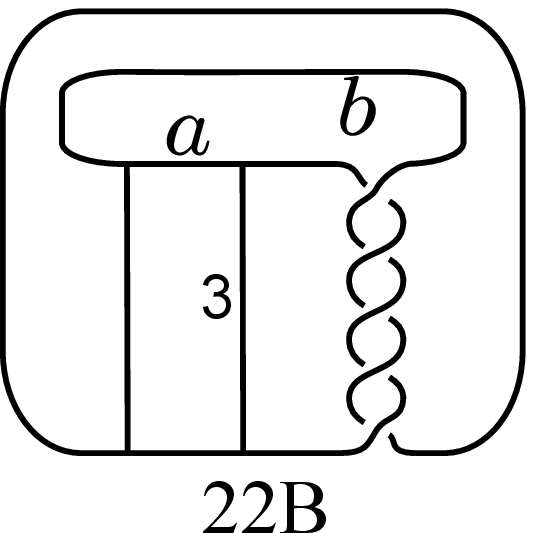}}
\hspace{10pt}\raisebox{35pt}
{\parbox[t]{102pt}{$|G|=1440$\\$\a=121\\\Sigma_{43,36},\Sigma_{55,12}$}}
\end{center}

\begin{center}
\scalebox{0.4}{\includegraphics*{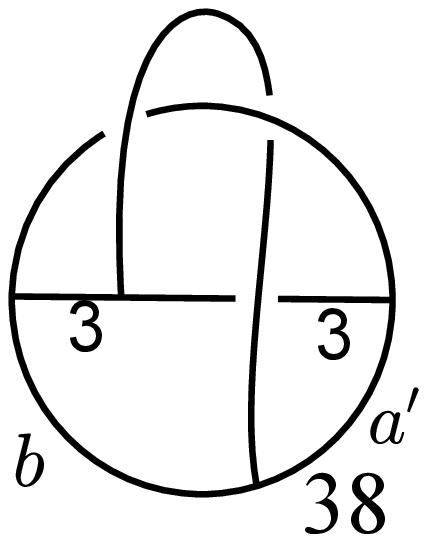}}
\hspace{10pt}\raisebox{35pt}
{\parbox[t]{102pt}{$|G|=2880$\\$\a=241\\ \Sigma_{73,96}, \Sigma_{97,48},\Sigma^-_{206,36}$}}\hspace*{180pt}
\end{center}

\centerline{$|G|=8(\a-1)$}
\begin{center}
\scalebox{0.4}{\includegraphics*{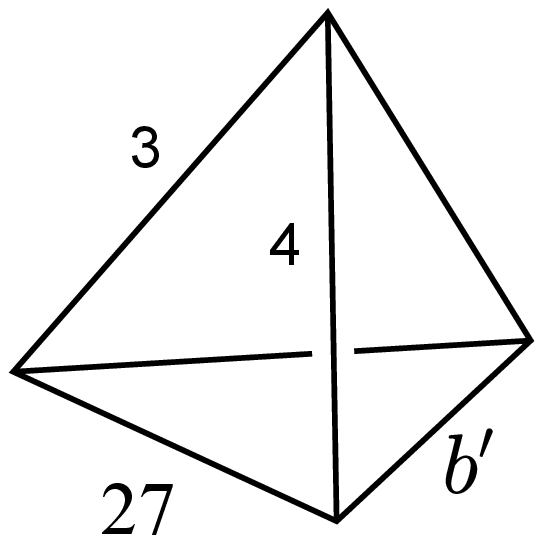}}
\hspace{10pt}\raisebox{35pt} {\parbox[t]{102pt}{$|G|=48$\\$\a=7\\ \Sigma_{0,8},\Sigma^-_{4,4}$}}
\scalebox{0.4}{\includegraphics*[0pt,0pt][152pt,152pt]{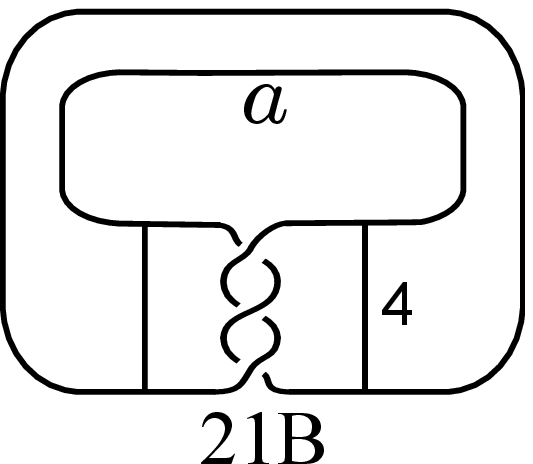}}
\hspace{10pt}\raisebox{35pt} {\parbox[t]{102pt}{$|G|=384$\\$\a=49\\\Sigma_{17,16},\Sigma_{21,8}$}}
\end{center}

\centerline{$|G|=20(\a-1)/3$}
\begin{center}
\scalebox{0.4}{\includegraphics*[0pt,0pt][152pt,152pt]{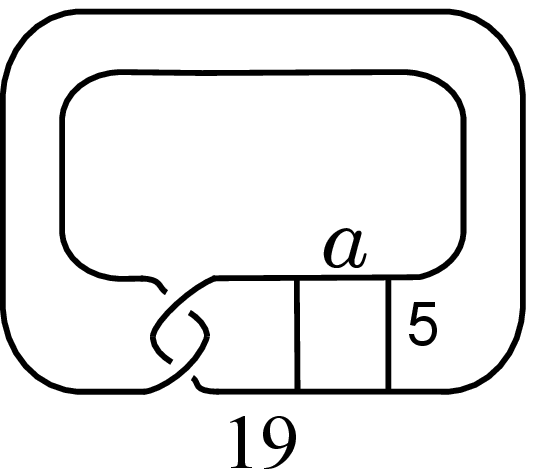}}
\hspace{10pt}\raisebox{35pt} {\parbox[t]{102pt}{$|G|=100$\\$\a=16\\\Sigma_{6,5}$}}
\scalebox{0.4}{\includegraphics*{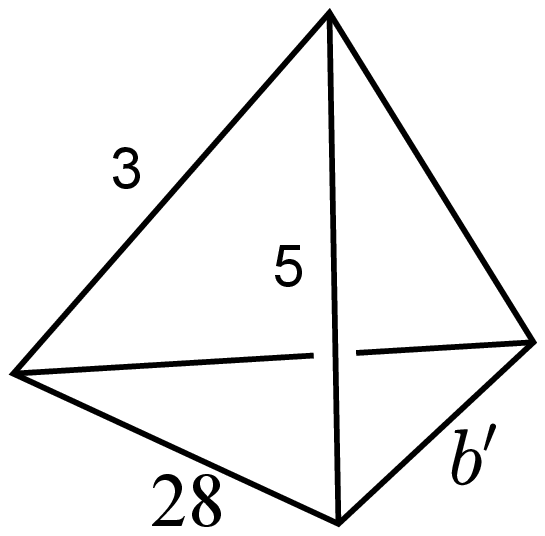}}
\hspace{10pt}\raisebox{35pt} {\parbox[t]{102pt}{$|G|=120$\\$\a=19\\\Sigma_{0,20},\Sigma^-_{14,6}$}}
\end{center}

\begin{center}
\scalebox{0.4}{\includegraphics*[0pt,0pt][152pt,152pt]{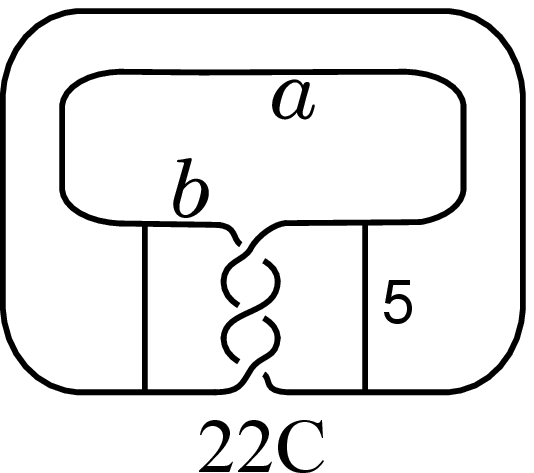}}
\hspace{10pt}\raisebox{35pt}
{\parbox[t]{102pt}{$|G|=2400$\\$\a=361\\\Sigma_{131,100},\Sigma_{151,60},\Sigma_{171,20}$}}\hspace*{180pt}
\end{center}

\centerline{$|G|=6(\a-1)$}
\begin{center}
\scalebox{0.4}{\includegraphics*{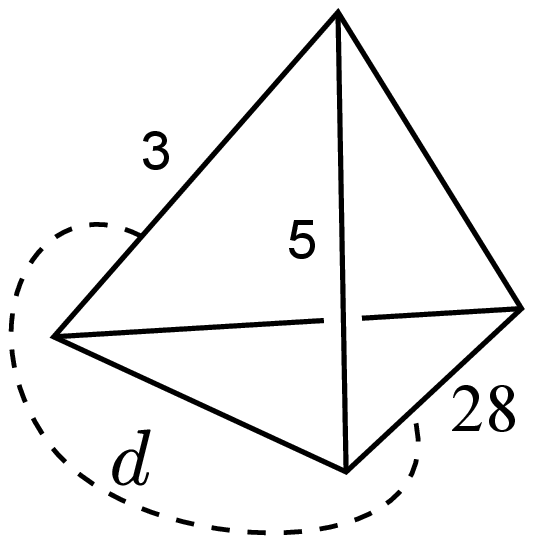}}
\hspace{10pt}\raisebox{35pt} {\parbox[t]{102pt}{$|G|=120$\\$\a=21\\\Sigma_{5,12}$}}
\scalebox{0.4}{\includegraphics*{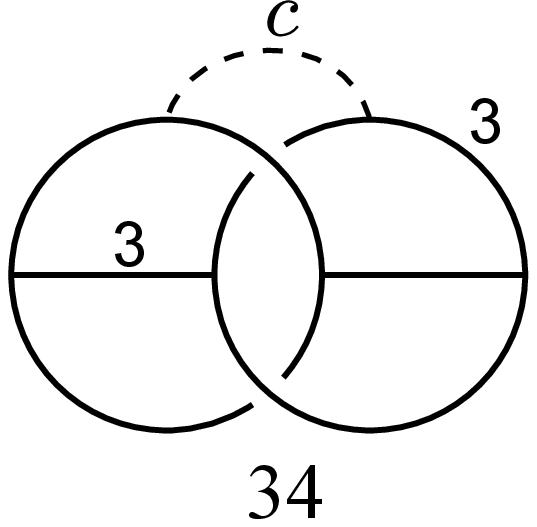}}
\hspace{10pt}\raisebox{35pt} {\parbox[t]{102pt}{$|G|=120$\\$\a=21\\\Sigma_{5,12}$}}
\end{center}

\begin{center}
\scalebox{0.4}{\includegraphics*{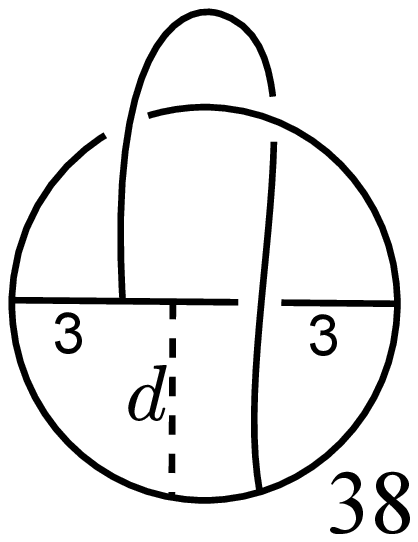}}
\hspace{10pt}\raisebox{35pt}
{\parbox[t]{102pt}{$|G|=2880$\\$\a=481\\\Sigma_{205,72},\Sigma_{193,96}$}}\hspace*{180pt}
\end{center}

\centerline{$|G|=24(\a-1)/5$ and $30(\a-1)/7$}
\begin{center}
\scalebox{0.4}{\includegraphics*{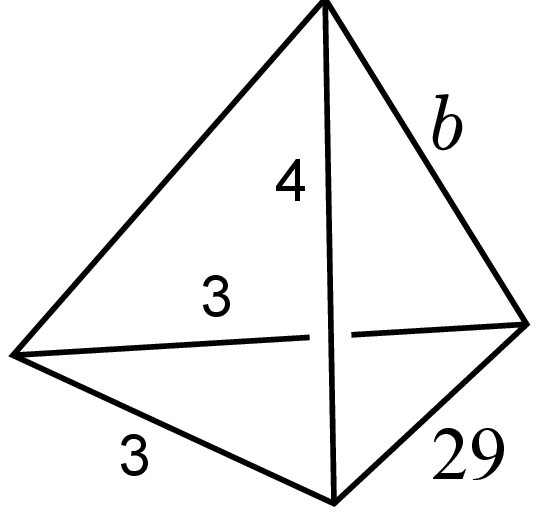}}
\hspace{10pt}\raisebox{35pt} {\parbox[t]{102pt}{$|G|=192$\\$\a=41\\\Sigma^-_{30,12}$}}
\scalebox{0.4}{\includegraphics*{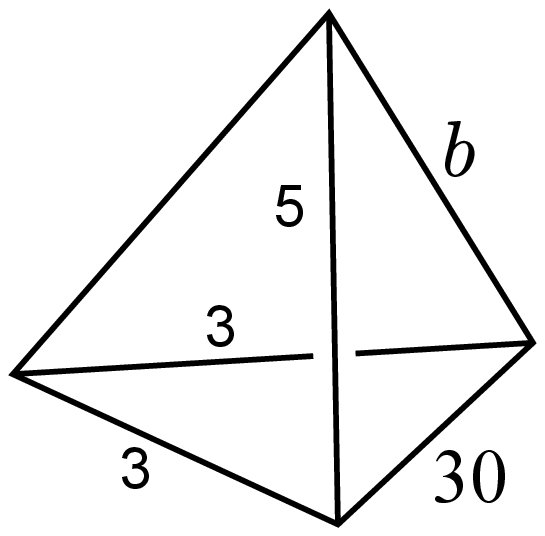}}
\hspace{10pt}\raisebox{35pt}
{\parbox[t]{102pt}{$|G|=7200$\\$\a=1681\\\Sigma^-_{1562,120}$}}
\end{center}

\centerline{$|G|=4(\sqrt{\a}+1)^2$, $g=k^2, k\neq3, 5, 7, 11, 19,
41$}
\begin{center}
\scalebox{0.4}{\includegraphics*[0pt,0pt][152pt,152pt]{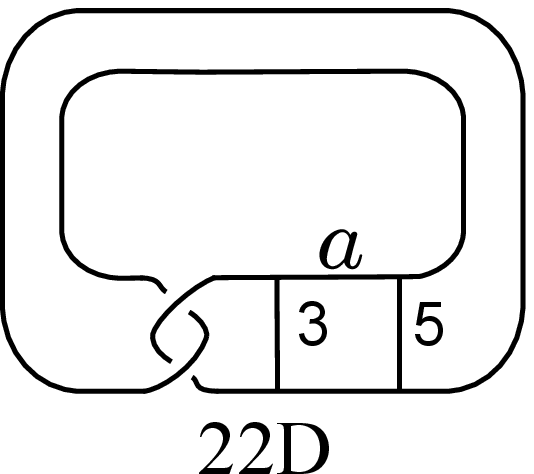}}
\hspace{10pt}\raisebox{35pt}
{\parbox[t]{102pt}{$|G|=3600$\\$\a=841\\\Sigma_{391,60},\Sigma_{406,30}$}}
\scalebox{0.4}{\includegraphics*[0pt,0pt][152pt,152pt]{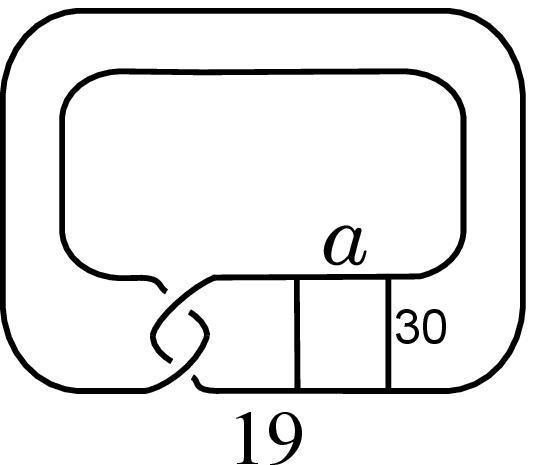}}
\hspace{10pt}\raisebox{35pt}
{\parbox[t]{102pt}{$|G|=3600$\\$\a=841\\\Sigma_{391,60}$}}
\end{center}

\begin{center}
\scalebox{0.4}{\includegraphics*[0pt,0pt][152pt,152pt]{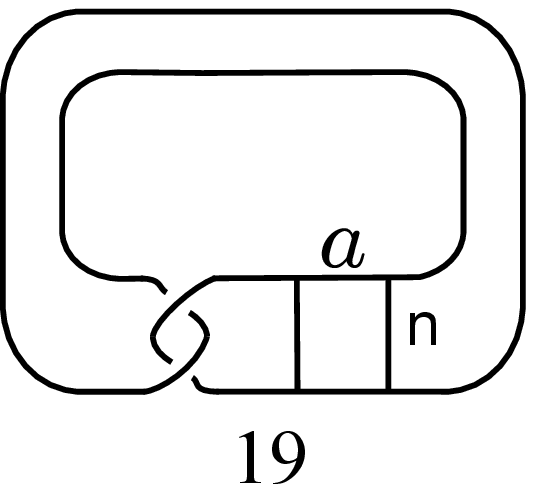}}
\hspace{10pt}\raisebox{35pt}
{\parbox[t]{102pt}{Others, $|G|=4n$\\$\a=(n-1)^2$\\$\Sigma_{\frac{(n-1)(n-2)}{2},n}$}}\hspace*{180pt}
\end{center}

\centerline{$|G|=4(\a+1)$, $\a$ belongs to remaining numbers}
\begin{center}
\scalebox{0.4}{\includegraphics*{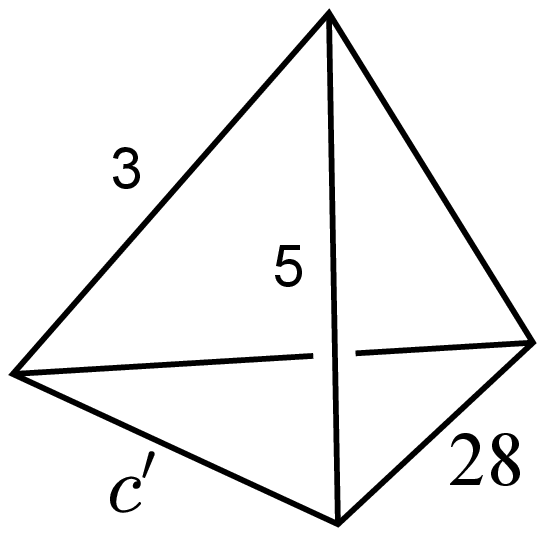}}
\hspace{10pt}\raisebox{35pt} {\parbox[t]{102pt}{$|G|=120$\\$\a=29\\\Sigma_{0,30},\Sigma_{9,12}$}}
\scalebox{0.4}{\includegraphics*[0pt,0pt][152pt,152pt]{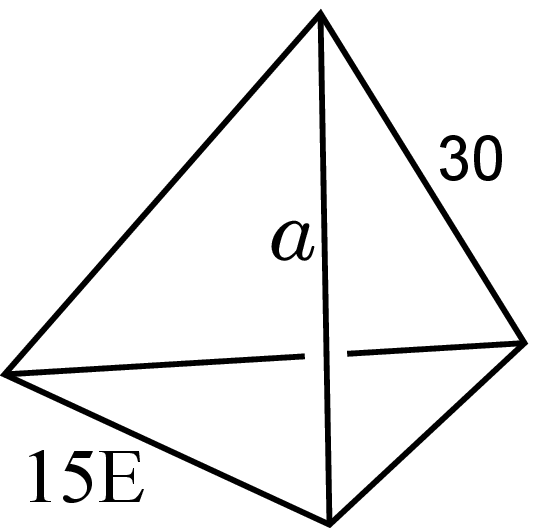}}
\hspace{10pt}\raisebox{35pt} {\parbox[t]{102pt}{$|G|=120$\\$\a=29\\\Sigma_{14,2}$}}
\end{center}

\begin{center}
\scalebox{0.4}{\includegraphics*[0pt,0pt][152pt,152pt]{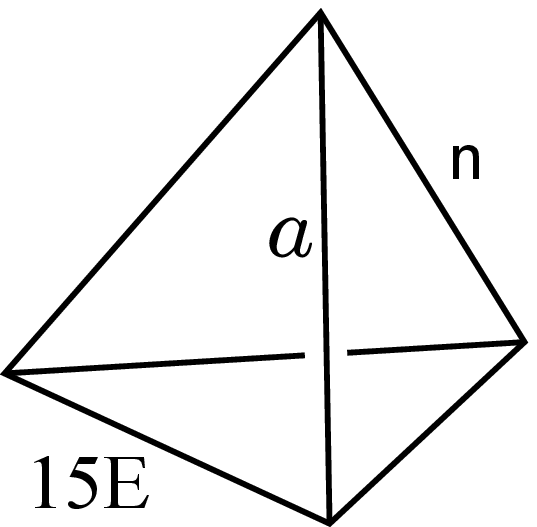}}
\hspace{10pt}\raisebox{35pt}
{\parbox[t]{102pt}{Others, $|G|=4n$\\$\a=n-1$\\
$\Sigma_{0,n}$, $\Sigma_{\frac{n-1}{2},1}(n \,\,\text{odd})$, $\Sigma_{\frac{n-2}{2},2}(n \,\,\text{even})$}}\hspace*{180pt}
\end{center}

\section {The bordered surfaces realizing the maximum symmetry}
We use the following lemma from \cite{WWZZ2} to compute the number of boundary components of the preimage of $X$.

\begin{lemma}[Lemma 2.7 \cite{WWZZ2}]\label{boundNums}
Suppose that $M$ is simply connected with an action of $G$, and that $X$ is a suborbifold of $M/G$ such that $|X|$ is connected. Let $i: X\hookrightarrow M/G$ be the inclusion map; then the preimage of $X$ in $M$ has $[\pi_1(M/G):i_*(\pi_1(X))]$ connected components.
\end{lemma}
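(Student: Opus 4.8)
The plan is to regard $p\colon M \to M/G$ as the universal orbifold covering and to count the connected components of $p^{-1}(X)$ by the monodromy correspondence of covering theory, working throughout in the orbifold category recalled in Section \ref{orbifold}. Since $M$ is simply connected, $p$ is a regular orbifold covering and $\pi_1(M/G)\cong G$; fixing a regular basepoint $x_0\in|X|$, the fibre $p^{-1}(x_0)$ is a principal homogeneous space for $\pi_1(M/G)$, i.e. the monodromy action of $\pi_1(M/G)$ on $p^{-1}(x_0)$ is the (simply transitive) regular action.

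First I would restrict $p$ over $X$. Using the orbifold covering theory and the Van Kampen theorem quoted in Section \ref{orbifold} (see \cite[Corollary 2.3]{BMP}), the map $p^{-1}(X)\to X$ is itself an orbifold covering, obtained as the pullback of $p$ along the inclusion $i$. Because $|X|$ is connected, $X$ is a connected orbifold, so its coverings are classified by the orbits of the monodromy action of $\pi_1(X)$ on the fibre, where this action is the composite of $i_*$ with the monodromy of $p$. Consequently the connected components of $p^{-1}(X)$ are in natural bijection with the orbits of the subgroup $H:=i_*(\pi_1(X))$ acting on $p^{-1}(x_0)$.

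To finish, I would compute these orbits. Identifying $p^{-1}(x_0)$ with $\pi_1(M/G)$ via the regular action, the restriction of the action to $H$ is left translation by $H$, whose orbits are exactly the cosets in $H\backslash\pi_1(M/G)$. Hence the number of components of $p^{-1}(X)$ equals $[\pi_1(M/G):i_*(\pi_1(X))]$. This index does not depend on the choice of $x_0$, since a different basepoint replaces $H$ by a conjugate subgroup, which has the same index.

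The main obstacle will be justifying the covering-space machinery in the orbifold setting rather than merely invoking the manifold case: one must check that $p^{-1}(X)\to X$ is a genuine orbifold covering even where $X$ meets the singular locus of $M/G$, and that the orbit--component correspondence persists there. The decisive simplification is that $M$ is an honest simply connected manifold carrying the $G$-action, so $p^{-1}(X)$ is an ordinary subspace of a manifold and its connected components are ordinary topological components; the classification of orbifold coverings and the Van Kampen theorem from Section \ref{orbifold} then supply precisely the orbit-counting statement required. An equivalent route bypasses the monodromy language altogether: $G$ acts on $p^{-1}(X)$ with quotient $X$ and permutes its components transitively since $|X|$ is connected, so identifying the stabiliser of one component with $i_*(\pi_1(X))$ gives the index formula directly from the orbit--stabiliser relation.
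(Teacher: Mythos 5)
Your proof is correct. Note that the paper itself gives no argument for this statement; it is quoted verbatim from Lemma 2.7 of \cite{WWZZ2}, so there is no internal proof to compare against. Your route---identifying $M\to M/G$ as the universal orbifold covering with $\pi_1(M/G)\cong G$ acting simply transitively on a regular fibre, restricting the covering over $X$, and counting components as orbits of $i_*(\pi_1(X))$ on the fibre, i.e.\ as cosets---is the standard proof of this fact and is in substance the argument the cited reference relies on; you also correctly flag the only point needing care (that the restriction of $p$ over a suborbifold, including where $X$ has reflector boundary and corner points, is again an orbifold covering), and your alternative orbit--stabiliser formulation, with transitivity of $G$ on the components following from connectedness of $|X|$, is an equally valid way to finish.
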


Then we use the following lemmas to determine whether the preimage of $X$ is oriented.

\begin{lemma}[Lemma 2.5 \cite{WWZZ2}]\label{oriented1}
Suppose that $G$ acts on a compact surface $\Sigma$ such that each singular point in the orbifold $X=\Sigma/G$ is isolated and the underlying space $|X|$ is orientable; then $\Sigma$ is orientable.
\end{lemma}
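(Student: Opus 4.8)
The plan is to restrict the branched covering $\Sigma \to X$ to its regular parts, pull back an orientation there through the resulting honest covering map, and then argue that filling the finitely many deleted points back in cannot destroy orientability. The first task is therefore to translate the orbifold hypothesis into a statement about the non-free locus in $\Sigma$. Let $F \subset \Sigma$ denote the set of points with nontrivial stabilizer. I would show that the assumption ``every singular point of $X$ is isolated'' forces $F$ to be a finite set of \emph{interior} points. Indeed, among the three local models for singular points of a $2$-orbifold recalled above (Figure 3), only type (a), the isolated cone point, comes from an interior point with a cyclic stabilizer acting by rotation; reflection boundaries (type (b)) and corner points (type (c)) arise exactly from points, possibly on $\partial\Sigma$, whose stabilizer contains an orientation-reversing involution, and these produce non-isolated singular points. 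Since by hypothesis all singular points of $X$ are isolated, the singular locus of $X$ is a finite set of interior cone points, and because $G$ is finite its preimage $F$ in $\Sigma$ is likewise a finite set of interior points. In particular $G$ acts freely on $\Sigma \setminus F$ and on a neighborhood of $\partial\Sigma$.

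Next I would set $\Sigma^{\circ} = \Sigma \setminus F$ and $X^{\circ} = |X| \setminus \{\text{cone points}\}$, so that $p\colon \Sigma^{\circ} \to X^{\circ}$ is an honest (unbranched) covering of manifolds: away from $F$ the action is free, so $p$ is a local homeomorphism near every point of $\Sigma^{\circ}$. Now $X^{\circ}$ is an open subsurface of the orientable surface $|X|$ and is therefore orientable. Fixing an orientation on $X^{\circ}$ and transporting it through the local homeomorphism $p$ yields a well-defined orientation on $\Sigma^{\circ}$; this is just the standard fact that any covering space of an orientable manifold is orientable.

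Finally I would recover orientability of the closed-up surface. Since $\Sigma^{\circ}$ is obtained from the compact surface $\Sigma$ by deleting the finite interior set $F$, which has codimension two, any loop in $\Sigma$ can be pushed off $F$, so the inclusion $\Sigma^{\circ} \hookrightarrow \Sigma$ induces a surjection $\pi_1(\Sigma^{\circ}) \to \pi_1(\Sigma)$. The orientation character of $\Sigma$ restricts to that of $\Sigma^{\circ}$ (the tangent bundle of $\Sigma^{\circ}$ is the restriction of that of $\Sigma$), so it factors as a homomorphism $\pi_1(\Sigma) \to \{\pm 1\}$ whose composite with the surjection above is the orientation character of $\Sigma^{\circ}$. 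As the latter vanishes, surjectivity forces the orientation character of $\Sigma$ to vanish as well, and hence $\Sigma$ is orientable.

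I expect the main obstacle to be the first step, namely the careful passage from the orbifold condition ``every singular point is isolated'' to the manifold statement ``the non-free locus $F$ is a finite set of interior points,'' which rests on the local classification of the three types of $2$-orbifold singular points and on ruling out orientation-reversing stabilizers along $\partial\Sigma$. Once $F$ is known to be finite and interior, the remaining steps are routine: they use only that covering spaces of orientable manifolds are orientable and that orientability of a surface is unaffected by deleting a finite (codimension-two) set of interior points.
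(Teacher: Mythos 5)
The paper gives no proof of this lemma at all: it is imported verbatim as Lemma 2.5 of \cite{WWZZ2}, so there is no internal argument to compare yours against. Judged on its own, your proof is correct and complete.

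The crux is your first step, and you handle it properly. For a smooth finite action the stabilizer of a point acts linearly in suitable local coordinates (average a Riemannian metric and linearize), so a nontrivial stabilizer either acts by rotations about an interior fixed point, producing an isolated cone point downstairs, or contains a local reflection, whose fixed-point set is an arc, so that the image point lies on a mirror boundary or is a corner point and is therefore not isolated; moreover a boundary point of $\Sigma$ can only carry a reflection-type stabilizer, never a rotation. Hence the hypothesis that all singular points of $X$ are isolated forces the non-free locus $F$ to be a finite set of interior points, exactly as you claim. The remaining steps are standard and correctly executed: off $F$ the quotient map is an honest covering onto the orientable surface $|X|$ minus its cone points, so $\Sigma \setminus F$ is orientable; and deleting a finite interior set does not affect orientability, which you justify via the surjection $\pi_1(\Sigma \setminus F) \to \pi_1(\Sigma)$ and the vanishing of the orientation character. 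A cosmetic alternative for that last step: the pulled-back orientation on $\Sigma \setminus F$ extends across each point of $F$ because the quotient map is locally modelled there on $z \mapsto z^k$, which matches the local orientations coherently; but your $w_1$-style argument is equally valid.
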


\begin{lemma}\label{oriented2}
Suppose that $G$ acts on a compact surface $\Sigma$ such that $X=\Sigma/G$ has at least one reflection boundary and the underlying space $|X|$ is orientable. Then $\Sigma$ is oriented if and only if there is a group homomorphism $h: G\rightarrow \Z_2$ sending each element corresponding to a reflection boundary of $X$ to the non-trivial element in $\Z_2$.
\end{lemma}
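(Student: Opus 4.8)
The plan is to treat the two implications separately, using the companion result Lemma~\ref{oriented1} to do the real work in the harder direction.

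For the forward implication I would first observe that, since $\Sigma$ is connected, once an orientation of $\Sigma$ is fixed every $g\in G$ is either globally orientation-preserving or globally orientation-reversing; assigning $0$ or the nontrivial element of $\Z_2$ accordingly defines a homomorphism $h\colon G\to\Z_2$. An element whose fixed locus maps to a reflection boundary acts near that locus as a Euclidean reflection, hence is orientation-reversing, so $h$ sends it to the nontrivial element. This produces the required $h$ with essentially no computation.

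For the converse, given such an $h$, I would pass to the index-$2$ subgroup $K=\ker h$ (it has index $2$ because some reflection element maps to the nontrivial element, so $h$ is onto) and study the intermediate orbifold $X_K=\Sigma/K$ together with the residual involution generating $G/K$, so that $X=X_K/(G/K)$. The first step is to check that $X_K$ has only \emph{isolated} singular points. A reflection stabilizer $\langle r\rangle$ meets $K$ trivially, since $r\notin K$, so its points become regular in $X_K$; at a corner the two generating reflections lie outside $K$ while their product is a rotation lying in $K$, so a dihedral stabilizer $D_n$ meets $K$ in its cyclic rotation subgroup and the corner becomes an ordinary cone point; and an isolated cone point has cyclic stabilizer whose intersection with $K$ is again cyclic. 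Thus $X_K$ carries only isolated (cyclic) singularities, which is exactly the hypothesis needed to apply Lemma~\ref{oriented1} to the action of $K$ on $\Sigma$.

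The remaining, and main, step is to show that $|X_K|$ is orientable; granting this, Lemma~\ref{oriented1} applied to $K$ acting on $\Sigma$ gives at once that $\Sigma$ is orientable. Here I would use that passing from $X$ to $X_K$ unfolds the mirror locus: the reflection boundary arcs of $X$ become interior curves of $X_K$, so that $|X_K|$ is obtained from two copies of the oriented surface $|X|$ glued along the mirror arcs, with the cone-point and corner images appearing as orientation-preserving branch points. Orienting the two copies oppositely, the two pulled-back orientations match across each unfolded mirror arc precisely because a reflection reverses local orientation; together with the orientability of $|X|$ this yields a global orientation on $|X_K|$. The crux of the whole argument—and the place where the hypotheses ``$|X|$ orientable'' and ``$h$ sends each reflection to the nontrivial element'' are genuinely consumed—is exactly this verification that the unfolded surface inherits a consistent orientation; the bookkeeping of how mirror arcs, corners and cone points lift to $X_K$ is where I expect the only real difficulty to lie.
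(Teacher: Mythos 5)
Your skeleton is exactly the paper's: the forward direction via the subgroup $G^o$ of orientation-preserving elements (reflections being orientation-reversing), and the converse by passing to $K=\ker h$, checking that $X_K=\Sigma/K$ has only isolated singular points, and invoking Lemma~\ref{oriented1}. Your stabilizer bookkeeping (reflection stabilizers miss $K$, dihedral stabilizers meet $K$ in their rotation subgroups, cyclic stabilizers stay cyclic) is correct, and you have correctly isolated the one point that the paper's own two-line proof passes over in silence: Lemma~\ref{oriented1} requires not only isolated singular points but also that $|X_K|$ be orientable.

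However, your proposed justification of that point is where the genuine gap lies. Saying that $|X_K|$ is ``two copies of $|X|$ glued along the mirror arcs'' is equivalent to saying that the covering $X_K\to X$ is the \emph{trivial} double cover over the complement of the mirror locus; and nothing in the hypothesis forces this. The restriction of the cover to the regular part of $X$ is classified by the values of $h\circ\phi$ (where $\phi:\pi_1(X)\to G$ is the holonomy of the orbifold covering $\Sigma\to X$) on loops in the regular part, while the hypothesis only constrains the values of $h$ on reflections. Concretely: let $\Sigma$ be the M\"obius band and $G=\Z_2$ generated by the reflection $\tau$ in its core circle. Then $X$ is an annulus whose mirror boundary is the image of the core, $|X|$ is orientable, and $h=\mathrm{id}$ sends the reflection element to the nontrivial element of $\Z_2$; yet $X_K=\Sigma$ itself is a M\"obius band --- not the double, which would be an annulus --- and is non-orientable, so here even the conclusion fails (a core-parallel loop $a$ in the regular part has $\phi(a)=\tau$, so the cover is connected over the regular part). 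This shows the step you call the crux cannot be closed from the stated hypothesis alone: what is actually needed, and what holds in all of the paper's applications (where $|X|$ is a disk whose orbifold fundamental group is generated by the reflections, where cone points have odd order, or where $h$ is explicitly checked to kill the cone generator, as for orbifold 15E), is the stronger condition that $h\circ\phi$ equals the full orientation character of $X$, i.e.\ that $h$ also kills the images of cone rotations and of all loops in the regular part of $X$. Under that reading the cover over the regular part is trivial, your doubling picture is correct, and the argument goes through; without it, both your proof and, strictly speaking, the paper's even terser one are incomplete at the same spot.
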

\begin{proof}
If $\Sigma$ is oriented, let $G^o$ be the subgroup of $G$ which contains all elements preserving the orientation of $\Sigma$; then $G/G^o \cong \Z_2$ is the desired homomorphism. Conversely, if $h$ is such a homomorphism, then consider a double cover $\tilde{X}$ of $X$ corresponding to the subgroup of the kernel of $h$. $\tilde{X}$ has only isolated singular points, so by the previous lemma $\Sigma$ is oriented.
\end{proof}

So using these lemmas, we check all the allowable $X$ coming from allowable singular edges/dashed arcs in Table 4 of Theorem 2.2 and determine the number of boundary components, orientability and the topological genus of their preimages in $S^3$.

\begin{proof}[Proof of Theorem \ref{mg}]
Note that in Table \ref{tab:allow} only two orbifolds $15$E and $19$ have a free parameter $n$; for these two orbifolds, we check the preimages of possible embeddings of $X$ as follows.

In the orbifold $15$E, we lift the embedding of $X$ to a double cover $S^3/\widetilde{G}$, where $\widetilde{G}$ is an abelian group of order $2n$:
$$\widetilde{G}=\langle x,y \mid x^2, y^n, xyx^{-1}y^{-1}\rangle.$$
Here $x$ and $y$ are the generators corresponding to the index $2$ and index $n$ singular circles in $S^3/\widetilde{G}$.

One possible embedding of $X$ is as in Figure 6; by lifting to a double cover, the fundamental group of the boundary of $\widetilde{X}$ in $S^3/\widetilde{G}$ corresponds to the subgroup generated by $x$ and $yxy^{-1}=x$ (the image of the fundamental group of the orbifold corresponding to the boundary of $\widetilde{X}$). So this subgroup has index $n$ in $\widetilde{G}$, which means $\Sigma$ has $n$ boundary components by Lemma \ref{boundNums}. Furthermore, in this case,
$$\pi_1(\widetilde{X})=\langle x, y\rangle$$
where $x$ corresponds to the reflection boundary and $y$ corresponds to the isolated singular point in $\widetilde{X}$. If we take
$$\Z_2\cong\langle t \mid t^2\rangle,$$
then $h: \widetilde{G} \rightarrow \Z_2: (x,y)\mapsto(t, e)$ is a homomorphism. So by Lemma \ref{oriented2}, $\Sigma$ is oriented. Then by the relation $\a=2g-1+b$, we have $\Sigma = \Sigma_{0,n}$.
\begin{center}
\scalebox{0.6}{\includegraphics{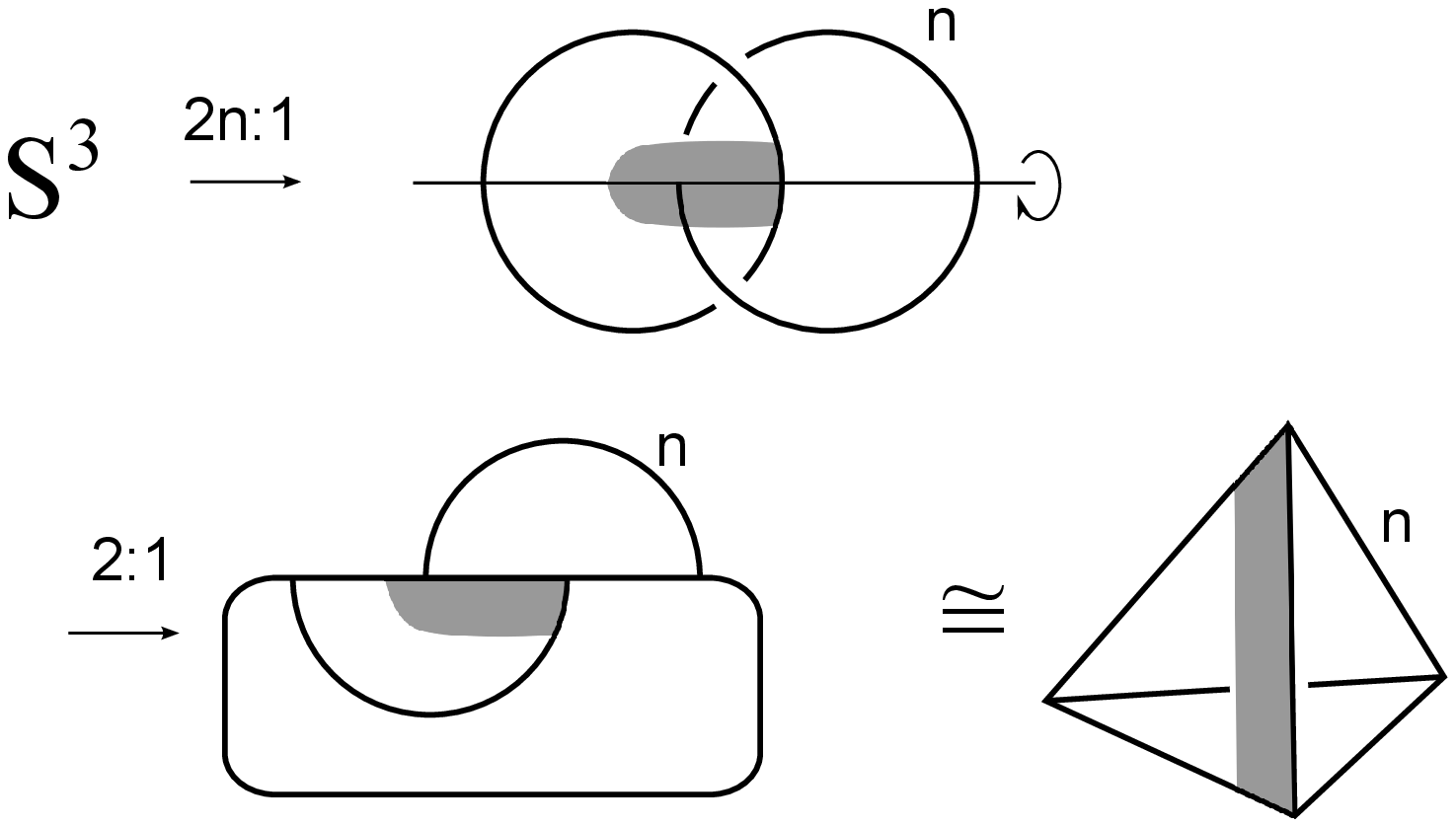}}

Figure 6 Embeddings of 2-orbifolds into 3-orbifolds
\end{center}
Another possible embedding of $X$ is as in Figure 7; by lifting to a double cover, the image of the fundamental group of the boundary of $\widetilde{X}$ in $S^3/\widetilde{G}$ corresponds to the subgroup generated by $xy$. This subgroup has index $1$ if $n$ is odd, and index $2$ is $n$ is even. By Lemma \ref{oriented1}, $\Sigma$ is oriented, so $\Sigma= \Sigma_{\frac{n-1}{2},1}$ for $n$ odd and $\Sigma= \Sigma_{\frac{n-2}{2},2}$ for $n$ even.
\begin{center}
\scalebox{0.6}{\includegraphics{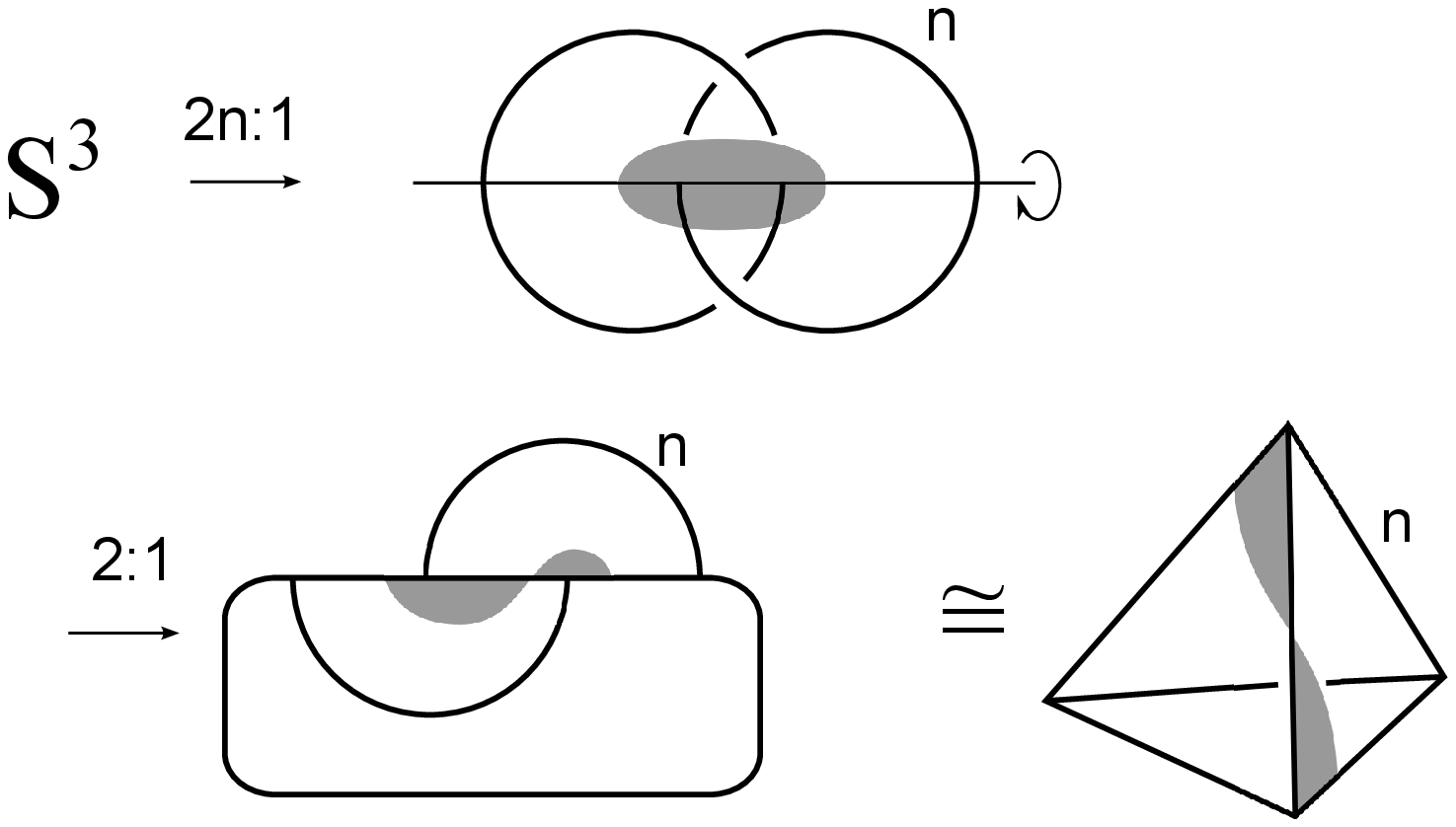}}

Figure 7 Embeddings of 2-orbifolds into 3-orbifolds
\end{center}
For other possible embeddings of $X$ in the orbifold $15$E (we mean in the sequence given in Figure 5(a)), it is easy to see $\Sigma$ is diffeomorphic to one case discussed above.

In the orbifold $19$, one possible embedding of $X$ is as in Figure 8. By lifting to a $4$-sheet cover, the boundary of $\widetilde{X}$ in $S^3/\widetilde{G}$ corresponds to the subgroup generated by $xy$, where
$$\widetilde{G}=\langle x,y \mid x^n, y^n, xyx^{-1}y^{-1}\rangle.$$
 This subgroup has index $n$. By Lemma \ref{oriented1}, $\Sigma$ is oriented. So $\Sigma= \Sigma_{\frac{(n-1)(n-2)}{2},n}$. For other possible embeddings of $X$ in the orbifold $19$, it is easy to see that we get the same $\Sigma$.
\begin{center}
\scalebox{0.6}{\includegraphics{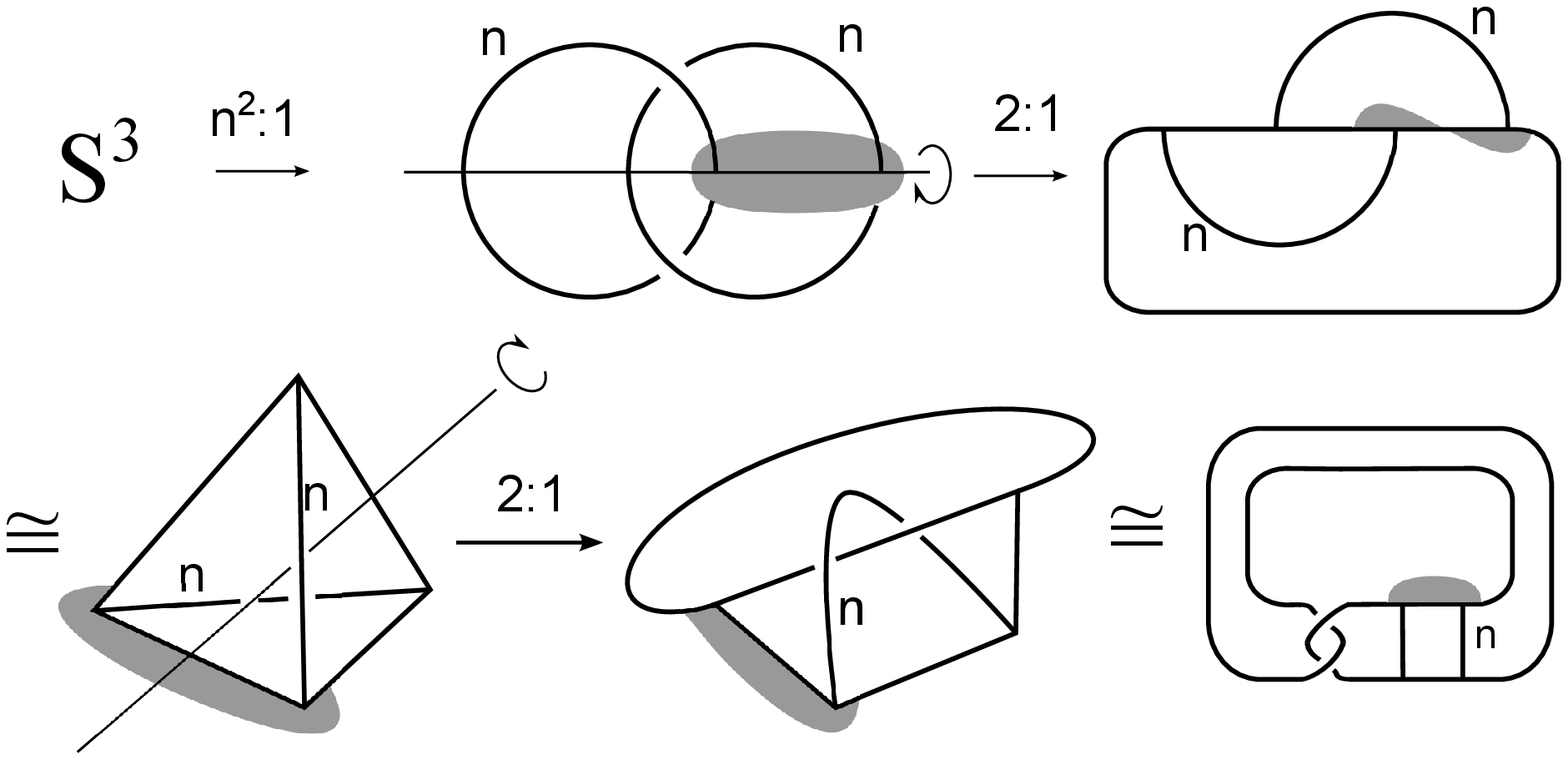}}

Figure 8 Embeddings of 2-orbifolds into 3-orbifolds
\end{center}

We also remark that in these orbifolds, one can also directly lift $X$ to $S^3$ to see which surface $\Sigma$ is. But we think it is a good example here to show how the Lemmas given above work. In all other orbifolds given in Table \ref{tab:allow}, there are no parameters and only finitely many groups $G$, so we can check the situation by some computer software case by case. We present two such examples, one for the case of a singular edge and one for a dashed arc; in all other cases, the computations are similar.
\end{proof}

\begin{example}[Singular edge case]
Let us show how we use \cite{GAP} to determine the bordered surface type for the singular edge $a'$ in orbifold $28$. First we use the Wirtinger presentation with generators as in Figure 9 to get

\begin{center}
\scalebox{0.8}{\includegraphics{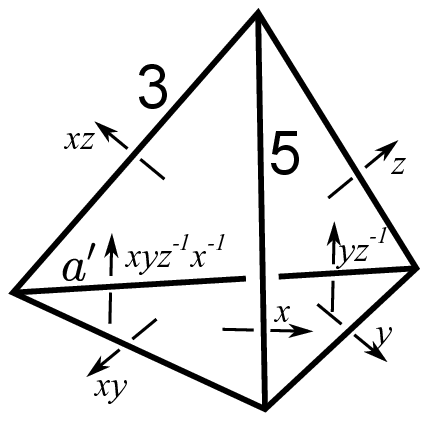}}

Figure 9 Generators of the fundamental group of a tetrahedral 3-orbifold
\end{center}

$$G=\pi_1(\mathcal{O}_{28})=\langle x,y,z\mid
x^5,y^2,z^2,(xz)^3,(xy)^2,(yz^{-1})^2\rangle.$$
Then consider the possible embeddings of $X$ with $a'$ as one of its reflection edge. The other reflection edge on the left corner must correspond to the edge labeled $xy$, and the other reflection edge on the right corner may correspond to the edge labeled $y$ or $z$. Note also that the embedding of $X$ may have some twist around $a'$, so the subgroup corresponding to the boundary of $X$ is one of the following:
$$G_1 = \langle xy, xyx^{-1}\rangle \subset G,$$
$$G_2 = \langle xy, xzx^{-1}\rangle \subset G,$$
$$G_3 = \langle xy, (xyz^{-1})y(xyz^{-1})^{-1}\rangle \subset G,$$
$$G_4 = \langle xy, (xyz^{-1})z(xyz^{-1})^{-1}\rangle \subset G.$$
Then we use \cite{GAP} to compute the index $G:G_i$ to get the number of boundary components: $|G:G_1|=|G:G_2|=12$ and $|G:G_3|=|G:G_4|=6$. Also we use Lemma \ref{oriented2} to determine if $\Sigma$ is oriented: the surfaces corresponding to $G_1$ and $G_2$ are oriented, the surfaces corresponding to $G_3$ and $G_4$ are non-oriented. So we get $\Sigma_{0,12}$ and $\Sigma^-_{6,6}$ for this case. Codes for \cite{GAP} are listed here.

\begin{lstlisting}
f := FreeGroup( "x","y","z");
G := f / [f.1^5, f.2^2, f.3^2, (f.1*f.3)^3, (f.1*f.2)^2,
    (f.2*f.3^-1)^2];
Print(Size(G),"\n"); #120
f2 := FreeGroup( "t"); Z2 := f2 / [f2.1*f2.1]; x:=G.1;
y:=G.2; z:=G.3; midarc:=x*y*z^-1*x^-1; rightArc1:=x*y*x^-1;
rightArc2:=x*z*x^-1; leftArc:=x*y; algebGenus := 11;
iso1:=IsomorphismPermGroup(G); iso2:=IsomorphismPermGroup(Z2);
g1:=Image(iso1); z2:=Image(iso2); img1:=Image(iso1,midarc);
img2:=Image(iso1,rightArc1); img3:=Image(iso1,leftArc);
img22:=Image(iso1, rightArc2); img4:=Image(iso2,Z2.1); l:=[];
if GroupHomomorphismByImages(g1,z2,[img1,img2,img3],[img4,img4,img4])<>fail then
	Print("oriented!\n"); oriented:=1;
else
	Print("non-oriented!\n"); oriented:=0;
fi;
borderSubG := GroupWithGenerators([rightArc1,leftArc]);
b:=Index(G, borderSubG);
if oriented=1 then
	g:=(algebGenus+1-b)/2;
else
	g:=algebGenus+1-b;
fi;
Add(l, [g,b,oriented]);
borderSubG := GroupWithGenerators([rightArc1,midarc*leftArc*midarc^-1]);
b:=Index(G, borderSubG);
if oriented=1 then
	g:=(algebGenus+1-b)/2;
else
	g:=algebGenus+1-b;
fi;
Add(l, [g,b,oriented]);
if GroupHomomorphismByImages(g1,z2,[img1,img22, img3],[img4,img4,img4])<>fail then
	Print("oriented!\n"); oriented:=1;
else
	Print("non-oriented!\n"); oriented:=0;
fi;
borderSubG := GroupWithGenerators([rightArc2,leftArc]);
b:=Index(G, borderSubG);
if oriented=1 then
	g:=(algebGenus+1-b)/2;
else
	g:=algebGenus+1-b;
fi;
Add(l, [g,b,oriented]);
borderSubG := GroupWithGenerators([rightArc2,midarc*leftArc*midarc^-1]);
b:=Index(G, borderSubG);
if oriented=1 then
	g:=(algebGenus+1-b)/2;
else
	g:=algebGenus+1-b;
fi;
Add(l, [g,b,oriented]);
Print(l, "\n");
\end{lstlisting}
\end{example}
\noindent \textbf{Remark.}
In this case the group $G$ is $A_5 \times \Z_2$ of order $120$, and all computations can be easily done by hand, by working with permutations and cycles in $A_5$ (and similarly for various other small orders). However, for larger orders it is convenient to use some computer algebra.

\begin{example}[Dashed arc case]
We use again the orbifold $28$, considering now the embeddings of $X$ corresponding to the dashed arc as shown in Figure 10.

\begin{center}
\scalebox{0.5}{\includegraphics{g21_1.eps}}

Figure 10 Dashed arc in a tetrahedral 3-orbifold
\end{center}
We use the same Wirtinger presentation as in the previous example.
The dashed arc may be any arc in the orbifold with the same endpoints as that in Figure 10. So the subgroup corresponding to some embedding of $X$ should be generated by $y$ and $cxzc^{-1}$, here $c$ may be any element in $G$. \cite{GAP} can systematically create all elements $c$ of the group $G$ by some standard procedure. For each element $c\in G$ we first need to check if $y$ and $cxzc^{-1}$ generates the whole group of $G$, which means the preimage of $X$ is connected. Then if $X$ is as Figure 5(b) left, then the subgroup corresponding to the boundary of $X$ is generated by $\langle y, (cxzc^{-1})y(cxzc^{-1})^{-1} \rangle$ or $\langle y, (c(xz)^{-1}c^{-1})y(c(xz)^{-1}c^{-1})^{-1} \rangle$. If $X$ is as Figure 5(b) right, then the subgroup corresponding to the boundary of $X$ is generated by $\langle y(cxzc^{-1}) \rangle$ or $\langle y(cxzc^{-1})^{-1} \rangle$. Then we use Lemma \ref{boundNums} to determine the number of boundary components by the index corresponding to their subgroups: in fact, all these subgroups have index $12$.. We use Lemma \ref{oriented1} or \ref{oriented2} to determine if it is oriented: they are all oriented. So we get $\Sigma_{5,12}$ for this case.
\begin{lstlisting}
f := FreeGroup( "x","y","z");
G := f / [f.1^5, f.2^2, f.3^2, (f.1*f.3)^3, (f.1*f.2)^2,
    (f.2*f.3^-1)^2];
Print(Size(G),"\n"); #120
x:=G.1; y:=G.2; z:=G.3; f2 := FreeGroup( "t");
Z2 := f2 / [f2.1*f2.1]; iso1:=IsomorphismPermGroup(G);
iso2:=IsomorphismPermGroup(Z2); g1:=Image(iso1);
z2:=Image(iso2); img1:=Image(iso1,y); img2:=Image(iso1,x*z);
img3:=Image(iso2,Z2.1); img4:=Image(iso2,Z2.1*Z2.1^-1);
l:=[]; oriented:=0; algebGenus := 1 + Size(G)/6;
if GroupHomomorphismByImages(g1,z2,[img1,img2],[img3,img4])<>fail then
	Print("oriented!\n"); oriented:=1;
else
	Print("non-oriented!\n");
fi;
for e in G do
	conjEle:=e^-1*x*z*e;
	subG:=GroupWithGenerators([y,e^-1*x*z*e]);
	if Index(G, subG)=1 then
		borderSubG := GroupWithGenerators([y*e^-1*x*z*e]);
		b:=Index(G, borderSubG);
        g:=(algebGenus+1-Index(G, borderSubG))/2;
		if ([g,b,1] in l)<>true then
			Add(l, [g,b,1]);
		fi;
		borderSubG := GroupWithGenerators([y^-1*e^-1*x*z*e]);
		b:=Index(G, borderSubG);
		g:=(algebGenus+1-Index(G, borderSubG))/2;
		if ([g,b,1] in l)<>true then
			Add(l, [g,b,1]);
		fi;
		borderSubG := GroupWithGenerators([y, conjEle^-1*y*conjEle]);
		b:=Index(G, borderSubG);
		if oriented=1 then
			g:=(algebGenus+1-Index(G, borderSubG))/2;
		else
			g:=(algebGenus+1-Index(G, borderSubG));
		fi;
		if ([g,b,oriented] in l)<>true then
			Add(l, [g,b,oriented]);
		fi;
		conjEle:=e^-1*(x*z)^-1*e;
		borderSubG := GroupWithGenerators([y, conjEle^-1*y*conjEle]);
		b:=Index(G, borderSubG);
		if oriented=1 then
			g:=(algebGenus+1-Index(G, borderSubG))/2;
		else
			g:=(algebGenus+1-Index(G, borderSubG));
		fi;
		if ([g,b,oriented] in l)<>true then
			Add(l, [g,b,oriented]);
		fi;
	fi;
od;
Print(l,"\n");
\end{lstlisting}
\end{example}

\noindent Chao Wang, School of Mathematical Sciences, University of
Science and Technology of China, 230026 Hefei, CHINA\\
{\it E-mail address:} chao\_{}wang\_{}1987@126.com

\noindent Shicheng Wang, School of Mathematical  Sciences,
100871 Beijing, CHINA\\
{\it E-mail address:} wangsc@math.pku.edu.cn

\noindent Yimu Zhang, Mathematics School, Jilin University,
 130012 Changchun, CHINA\\
{\it E-mail address:} zym534685421@126.com

\noindent Bruno Zimmermann, Dipartimento di Matematica e Geoscienze, Universit\`a degli Studi di Trieste, 34127
Trieste, ITALY\\
{\it E-mail address:} zimmer@units.it
\end{document}